\numberwithin{equation}{section}
\newcommand\M{\mathcal{M}}
\newtheorem{thm}{Theorem}[section]
 \newtheorem{cor}[thm]{Corollary}
 \newtheorem{lem}[thm]{Lemma}
 \newtheorem{prop}[thm]{Proposition}
 \theoremstyle{definition} 
 \newtheorem{defn}[thm]{Definition}
 \newtheorem{rem}[thm]{Remark}
 \newcommand{\abs}[1]{{\left\lvert{#1}\right\rvert}}
\newcommand{\norm}[1]{{\left\lVert{#1}\right\rVert}}
\newcommand{\br}[1]{{\left({#1}\right)}}
\author{Li Chen}
\address{Li CHEN, Mathematical Sciences Institute, The Australian National University, Canberra ACT 0200, Australia}
\email{li.chen@anu.edu.au}
\begin{document}

\title[Sub-Gaussian heat kernel estimates and quasi Riesz transforms]{Sub-Gaussian heat kernel estimates and quasi Riesz transforms for $1\leq p\leq 2$}
\maketitle

\noindent{\small{{\bf Abstract.} On a complete non-compact Riemannian manifold $M$, we prove that a so-called quasi Riesz 
transform is always $L^p$ bounded for $1<p\leq 2$. If $M$ satisfies the doubling volume property and the sub-Gaussian heat kernel 
estimate, we prove that the quasi Riesz transform is also of weak type $(1,1)$. }} 

\noindent{\small{{\bf Mathematics Subject Classification 2010: }}} 58J35 (primary); 42B20 (secondary).

\noindent{\small{{\bf Keywords: }}} Riemannian manifold, heat semigroup, Riesz transform, sub-Gaussian heat kernel estimates.

\section{Introduction}
Let $M$ be a complete non-compact Riemannian manifold. Let $d$ be the geodesic distance and $\mu$ be the  
Riemannian measure. Denote by $B(x,r)$ the ball of center $x$ and of geodesic radius $r$. We write $V(x,r)=\mu(B(x,r))$.
One says that $M$ satisfies the doubling volume property if, there exists a constant $C>0$ such that for any $x\in M$ and $r>0$,
\begin{equation}
\label{doubling}V(x,2r)\leq CV(x,r).\tag{$D$}
\end{equation}
A simple consequence of (\ref{doubling}) is that there exist $\nu>0$ and $C>0$ such that
\begin{equation} \label{D1}
\frac{V(x,r)}{V(x,s)}\leq C\left(\frac{r}{s}\right)^\nu,
\,\,\forall x\in M, \,r\geq s>0.
\end{equation}

Let $\nabla$ be the Riemannian gradient and $\Delta$ be the non-negative Laplace-Beltrami operator on $M$. By definition and by 
spectral theory, we have 
\[
\int_M |\nabla f|^2  d\mu=\int_M (\Delta f) f  d\mu.=\int_M \br{\Delta^{1/2} f}^2  d\mu,\,\,\forall f \in \mathcal C_0^{\infty}(M).
\]  

It was asked by Strichartz \cite{Str83} in 1983 on which non-compact Riemannian manifold $M$, and for which $p$, 
$1<p<+\infty$, the two semi-norms $\Vert |\nabla f | \Vert _p$ and $\Vert \Delta^{1/2} f\Vert _p$ were equivalent on $
\mathcal{C}_0^{\infty}(M)$. That is, when do there exist two constants $c_p, C_p$ such that
\begin{equation} \label{Ep}\tag{$E_p$}
c_p\Vert \Delta^{1/2}f\Vert _p\leq\Vert |\nabla f |\Vert _p \leq C_p\Vert \Delta^{1/2}f\Vert _p, \,\,\forall f \in \mathcal 
C_0^{\infty}(M)?
\end{equation}

One says that the Riesz transform $\nabla\Delta^{-1/2}$ is $L^p$ bounded on $M$ if 
\begin{equation} \label{Rp} \tag{$R_p$}
\Vert | \nabla f|\Vert _p \leq C\Vert \Delta^{1/2} f\Vert _p, \,\,\forall f \in \mathcal C_0^{\infty}(M).
\end{equation}

Ever since, a lot of work has been dedicated to address the problem, see for example, \cite{B87,CD99,CD03, 
ACDH04,AC05,CCH06,Ca07,CS10,De} and the references therein. 

Denote by $(e^{-t\Delta})_{t>0}$ the heat semigroup associated with $\Delta$ and $p_t(x,y)$ the heat kernel, that is
\[
e^{-t\Delta}f(x)=\int_M p_t(x,y) f(y) d\mu(y),\,\, f\in L^2(M,\mu),\mu\text{-a.e.~}x\in M.
\]
Estimates of the heat kernel and its derivatives happen to be a key ingredient for the boundedness of the Riesz transform. 

Let us first recall a result of Coulhon and Duong in \cite{CD99}.
\begin{thm}\label{thm-cd}
Let $M$ be a complete non-compact manifold satisfying (\ref{doubling}). Assume that 
\begin{equation} \label{DUE} \tag{$DU\!E$}
p_t(x,x)\leq \frac{C}{V(x,\sqrt t)},
\end{equation}
for all $x\in M$, $t>0$ and some $C>0$. Then the Riesz transform $\nabla\Delta^{-1/2}$ is of weak type $(1,1)$ and (\ref{Rp}) 
holds for $1<p\leq2$.
\end{thm}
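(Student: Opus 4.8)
The plan is to obtain the case $p=2$ of \eqref{Rp} for free, reduce everything that remains to the weak type $(1,1)$ bound, and conclude by interpolation. Indeed, the identity $\Vert |\nabla f|\Vert_2^2=\int_M(\Delta f)f\,d\mu=\Vert\Delta^{1/2}f\Vert_2^2$ recalled above shows that $T:=\nabla\Delta^{-1/2}$ extends to a bounded operator on $L^2(M)$ (in fact an isometry on $\overline{\mathrm{Ran}\,\Delta^{1/2}}$), which is the case $p=2$ of \eqref{Rp}. Granting that $T$ is of weak type $(1,1)$, the Marcinkiewicz interpolation theorem makes $T$ bounded on $L^p$ for every $1<p<2$, and then $\Vert |\nabla f|\Vert_p=\Vert T\Delta^{1/2}f\Vert_p\leq C_p\Vert\Delta^{1/2}f\Vert_p$ gives \eqref{Rp}. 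So all the work, and the only use of \eqref{doubling} and \eqref{DUE}, is in the weak type $(1,1)$ estimate.

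Two preliminary inputs will be needed. First, by the classical self-improvement theorem of Grigor'yan, on a complete manifold the conjunction of \eqref{doubling} and \eqref{DUE} forces the full Gaussian upper bound
\[
p_t(x,y)\leq \frac{C}{V(x,\sqrt t)}\exp\!\Big(-c\,\frac{d(x,y)^2}{t}\Big),\qquad x,y\in M,\ t>0,
\]
which yields, for $h$ supported in a ball $B=B(z,\sqrt t)$, the $L^1\!\to\!L^2$ smoothing estimate $\Vert e^{-t\Delta}h\Vert_{L^2(F)}\leq C\,V(z,\sqrt t)^{-1/2}e^{-c\,d(B,F)^2/t}\Vert h\Vert_1$ for every $F\subset M$. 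Second, on any complete manifold one has the dimension-free Davies--Gaffney (finite propagation speed) $L^2$ off-diagonal estimates $\Vert e^{-t\Delta}h\Vert_{L^2(F)}\leq e^{-d(E,F)^2/4t}\Vert h\Vert_{L^2(E)}$ and $\Vert |\nabla e^{-t\Delta}h|\Vert_{L^2(F)}+\sqrt t\,\Vert\Delta^{1/2}e^{-t\Delta}h\Vert_{L^2(F)}\leq C\,t^{-1/2}e^{-c\,d(E,F)^2/t}\Vert h\Vert_{L^2(E)}$ for $h$ supported in $E$; these need no volume or heat-kernel hypothesis, and combined with the Gaussian bound and the identity $e^{-t\Delta}=e^{-t\Delta/2}e^{-t\Delta/2}$ they upgrade the gradient estimate to an $L^1\!\to\!L^2$ Gaussian off-diagonal bound for $\nabla e^{-t\Delta}$ with the correct volume normalisation. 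Working in $L^2$ here is forced on us: \eqref{doubling} and \eqref{DUE} give no pointwise control on $\nabla_x p_t(x,y)$, and this is exactly the reason the conclusion stops at $p=2$.

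Now fix $f\in L^1\cap L^2$ and $\alpha>0$ and take the Calder\'on--Zygmund decomposition of $f$ at height $\alpha$ adapted to the doubling measure $\mu$: $f=g+\sum_i b_i$, with $\Vert g\Vert_2^2\leq C\alpha\Vert f\Vert_1$; each $b_i$ supported in a ball $B_i=B(x_i,r_i)$ with $\int_M b_i\,d\mu=0$, $\Vert b_i\Vert_1\leq C\alpha\,\mu(B_i)$ and $\sum_i\mu(B_i)\leq C\alpha^{-1}\Vert f\Vert_1$; and the dilated balls $4B_i$ of bounded overlap. The good part is disposed of by Chebyshev and the $L^2$ bound: $\mu(\{|Tg|>\alpha\})\leq 4\alpha^{-2}\Vert Tg\Vert_2^2\leq C\alpha^{-1}\Vert f\Vert_1$. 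For the bad part, put $\Omega=\bigcup_i 4B_i$ (so $\mu(\Omega)\leq C\alpha^{-1}\Vert f\Vert_1$), write $t_i=r_i^2$, and split
\[
T\Big(\sum_i b_i\Big)=\sum_i T\big(I-e^{-t_i\Delta}\big)b_i+\sum_i Te^{-t_i\Delta}b_i=:U+W.
\]
The term $W$ is deliberately \emph{not} estimated off the balls: by $L^2$-boundedness of $T$ one has $\Vert W\Vert_2\leq\Vert T\Vert_{2\to2}\big\Vert\sum_i e^{-t_i\Delta}b_i\big\Vert_2$, and an almost-orthogonality argument — expand the square, bound the diagonal terms via $\Vert e^{-t_i\Delta}b_i\Vert_2\leq C\,V(x_i,r_i)^{-1/2}\Vert b_i\Vert_1\leq C\alpha\,\mu(B_i)^{1/2}$ and the cross terms $|\langle e^{-(t_i+t_j)\Delta}b_i,b_j\rangle|$ using the Gaussian bound together with the bounded overlap — gives $\Vert W\Vert_2^2\leq C\alpha\Vert f\Vert_1$, whence $\mu(\{|W|>\alpha/2\})\leq C\alpha^{-1}\Vert f\Vert_1$. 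This step touches only $e^{-t_i\Delta}$ and the $L^2$ bound on $T$ as a black box, so no large-time behaviour of the operator $\nabla e^{-t\Delta}$ (which is $L^p$-unbounded for $p>2$ in this generality) ever enters.

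It remains to bound $\mu(\{x\notin\Omega:|U(x)|>\alpha/2\})\leq 2\alpha^{-1}\int_{M\setminus\Omega}|U|\,d\mu\leq 2\alpha^{-1}\sum_i\int_{M\setminus 4B_i}|T(I-e^{-t_i\Delta})b_i|\,d\mu$, so it suffices to prove $\int_{M\setminus 4B_i}|T(I-e^{-t_i\Delta})b_i|\,d\mu\leq C\Vert b_i\Vert_1$ for each $i$; summed against $\sum_i\Vert b_i\Vert_1\leq C\Vert f\Vert_1$ this closes the argument. For this one uses the ``local in time'' representation
\[
T\big(I-e^{-t_i\Delta}\big)=\int_0^{t_i}\nabla\Delta^{1/2}e^{-s\Delta}\,ds=\int_0^{t_i}\big(\nabla e^{-s\Delta/2}\big)\big(\Delta^{1/2}e^{-s\Delta/2}\big)\,ds,
\]
decomposes $M\setminus 4B_i$ into dyadic annuli $C_j=B(x_i,2^{j+1}r_i)\setminus B(x_i,2^jr_i)$, $j\geq2$, applies Cauchy--Schwarz to get $\int_{C_j}|T(I-e^{-t_i\Delta})b_i|\leq V(x_i,2^{j+1}r_i)^{1/2}\Vert T(I-e^{-t_i\Delta})b_i\Vert_{L^2(C_j)}$, and then chains the Davies--Gaffney bounds for $\nabla e^{-s\Delta/2}$ and $\Delta^{1/2}e^{-s\Delta/2}$ with the $L^1\!\to\!L^2$ smoothing of $e^{-s\Delta/2}$: since $d(B_i,C_j)\simeq 2^jr_i\gg\sqrt s$ for $s\leq t_i$, this produces an exponential factor $e^{-c\,4^jr_i^2/s}$ together with the normalisation $V(x_i,r_i)^{-1}\Vert b_i\Vert_1$; integrating in $s\in(0,t_i)$ and using \eqref{D1} to dominate $V(x_i,2^jr_i)/V(x_i,r_i)\leq C\,2^{j\nu}$, the series in $j$ converges, which is the desired bound. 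I expect this $U$-estimate to be the crux of the proof: carrying the iterated off-diagonal estimates through the composition $\nabla e^{-s\Delta/2}\cdot\Delta^{1/2}e^{-s\Delta/2}$ and combining them with the $L^1\!\to\!L^2$ mapping of the semigroup so that the volume factors cancel correctly, uniformly down to $s\to0$, is the delicate mechanism by which the Gaussian heat-kernel upper bound supplied by \eqref{doubling} and \eqref{DUE} gets converted into boundedness of an operator whose kernel one cannot estimate directly; the bounds for $W$, the good part, and the interpolation are routine by comparison.
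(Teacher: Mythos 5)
This theorem is not proved in the paper at all: it is quoted from Coulhon--Duong \cite{CD99}, and the closest in-house analogue is the proof of Theorem 3.7 in Section 3.3, which runs the same machine in the sub-Gaussian setting. Your architecture is exactly that of \cite{CD99} and of Section 3.3: $L^2$ boundedness for free, Calder\'on--Zygmund decomposition, the split $Tb_i=Te^{-t_i\Delta}b_i+T(I-e^{-t_i\Delta})b_i$ with $t_i=r_i^2$, and Marcinkiewicz interpolation. Where you genuinely diverge is in the term $U$: the paper (following \cite{CD99}) first proves the integrated weighted gradient bound $\int_{M\setminus B(y,r)}|\nabla_x p_s(x,y)|\,d\mu(x)\leq Cs^{-1/2}e^{-cr^2/s}$ (the $m=2$ case of Lemma \ref{lem2}, obtained by integration by parts against an exponential weight together with Davies' estimate on $\partial_t p_t$), writes the kernel of $T(I-e^{-t\Delta})$ as $\int_0^\infty\bigl(s^{\alpha-1}-\mathbf{1}_{s>t}(s-t)^{\alpha-1}\bigr)\nabla p_{s}(x,y)\,ds$ (with $\alpha=1/2$ here) over \emph{all} $s>0$, and integrates; you instead use the local-in-time identity $T(I-e^{-t_i\Delta})=\int_0^{t_i}\nabla\Delta^{1/2}e^{-s\Delta}\,ds$ and chain $L^2$ Davies--Gaffney off-diagonal bounds with the $L^1\to L^2$ Gaussian smoothing, Cauchy--Schwarz on annuli, and \eqref{D1}. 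That is the later Auscher--Martell/Blunck--Kunstmann style; it is correct (the identity checks out, the universal $L^2$ off-diagonal bounds for $\sqrt{s}\,\nabla e^{-s\Delta}$ and $\sqrt{s}\,\Delta^{1/2}e^{-s\Delta}$ hold on any complete manifold, and the $s$-integral and $j$-sum converge as you say), it avoids Davies' time-derivative estimates entirely, at the price of more bookkeeping in composing off-diagonal bounds.

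The one genuine soft spot is your treatment of $W$. The bound $\bigl\Vert\sum_i e^{-t_i\Delta}b_i\bigr\Vert_2^2\leq C\alpha\Vert f\Vert_1$ is true and is exactly what is needed, but ``expand the square and bound the cross terms $|\langle e^{-(t_i+t_j)\Delta}b_i,b_j\rangle|$ using the Gaussian bound and bounded overlap'' does not close as stated: the finite overlap controls $\sum_i\chi_{B_i}$, not the interaction of distant balls of wildly different radii, and for fixed $j$ the sum over $i$ of $\mu(B_i)\sup_{y\in B_i}p_{t_i+t_j}(\cdot,y)$ has no evident uniform bound because the times $t_i$ vary with $i$. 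The standard and clean route --- the one used verbatim in Section 3.3 of the paper --- is duality: $\bigl\Vert\sum_i e^{-t_i\Delta}b_i\bigr\Vert_2=\sup_{\Vert\phi\Vert_2=1}\sum_i\int|b_i|\,|e^{-t_i\Delta}\phi|$, dominate $\sup_{B_i}|e^{-t_i\Delta}\phi|$ by $\inf_{B_i}\bigl(M(|\phi|^2)\bigr)^{1/2}$ via the Gaussian upper bound and doubling, and conclude with finite overlap and Kolmogorov's inequality. Replace your almost-orthogonality step by this argument and the proof is complete.
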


Under the doubling volume property, (\ref{DUE}) self-improves into the Gaussian heat kernel estimate (see for example \cite{CS08}, 
\cite{Gr09}):
\begin{equation} \label{UE} \tag{$U\!E$}
p_t(x,y)\leq \frac{C}{V(x,\sqrt t)} \exp\left(-c\frac{d^2(x,y)}{t}\right),\,\,\forall x,y\in M, t>0.
\end{equation}

Note that (\ref{Rp}) may be false under (\ref{doubling}) and (\ref{DUE}) for $p> 2$. For example, the connected sum of two copies of
$\mathbb R^n$, $n\geq 2$, does satisfy (\ref{doubling}) and (\ref{DUE}), but the Riesz transform is not $L^p$ bounded for $p>n$. 
We refer to \cite{CD99,CCH06,Ca07} for more details. However, it is not known whether (\ref{DUE}) is necessary for the $L^p$ 
boundedness of the Riesz transform for $1< p< 2$. 

\medskip
We are going to see that one can still obtain a weaker version of (\ref{Rp}) for $1<p\leq2$ without assuming any heat kernel 
estimates.

To this end, we first localise the Riesz transform at infinity. Then one can consider some weaker variants of this localisation.
In fact, we are going to prove that on every complete manifold the Riesz transform is almost bounded in the following sense:
\begin{thm}
\label{thm1} Let $M$ be a complete manifold. Then, for any $\alpha \in (0,1/2)$, the operator $\nabla e^{-\Delta }
\Delta^{-\alpha}$ is bounded on $L^p$ for all $1<p\leq 2$.
\end{thm}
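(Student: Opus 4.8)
The plan is to deduce the statement, by subordination, from the known \emph{gradient--heat-semigroup bound}
\[
\big\Vert\,|\nabla e^{-t\Delta}h|\,\big\Vert_p\le \frac{C_p}{\sqrt t}\,\Vert h\Vert_p,\qquad h\in\mathcal C_0^\infty(M),\ 1<p\le 2,\ t>0,
\]
which holds on an arbitrary complete Riemannian manifold. I will take this as the main ingredient: for $p=2$ it is immediate from spectral theory, since $\big\Vert\,|\nabla e^{-t\Delta}h|\,\big\Vert_2^2=\langle\Delta e^{-2t\Delta}h,h\rangle\le(2te)^{-1}\Vert h\Vert_2^2$; for $1<p<2$ it is known as well on any complete manifold, the proof going through the $L^p$-boundedness ($1<p\le2$) of the vertical Littlewood--Paley--Stein functional $h\mapsto\big(\int_0^\infty|\nabla e^{-t\Delta}h|^2\,dt\big)^{1/2}$, together with the monotonicity in $t$ of the $p$-energy $\int_M|e^{-t\Delta}h|^{p-2}|\nabla e^{-t\Delta}h|^2\,d\mu$ — both obtained via Kato's inequality and a cut-off argument à la Coulhon--Duong. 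Note the $\nabla$ is essential: no analogous bound for $e^{-t\Delta}$ alone is expected without extra hypotheses, and indeed it is the $\nabla$ that compensates the singularity of $\Delta^{-\alpha}$ near the bottom of the spectrum.

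Granting this, start from the subordination identity $\Delta^{-\alpha}=\tfrac1{\Gamma(\alpha)}\int_0^\infty t^{\alpha-1}e^{-t\Delta}\,dt$ and compose with $\nabla e^{-\Delta}$ to get, at least formally,
\[
\nabla e^{-\Delta}\Delta^{-\alpha}f=\frac1{\Gamma(\alpha)}\int_0^\infty t^{\alpha-1}\,\nabla e^{-(1+t)\Delta}f\,dt .
\]
The first point is that this Bochner integral is absolutely convergent in $L^p$: by the gradient bound and Minkowski's integral inequality,
\[
\big\Vert\,|\nabla e^{-\Delta}\Delta^{-\alpha}f|\,\big\Vert_p\le\frac1{\Gamma(\alpha)}\int_0^\infty t^{\alpha-1}\big\Vert\,|\nabla e^{-(1+t)\Delta}f|\,\big\Vert_p\,dt\le\frac{C_p}{\Gamma(\alpha)}\,\Vert f\Vert_p\int_0^\infty\frac{t^{\alpha-1}}{\sqrt{1+t}}\,dt .
\]
The last integral is a Beta integral, equal to $\Gamma(\alpha)\Gamma(\tfrac12-\alpha)/\Gamma(\tfrac12)$, and it is finite \emph{precisely} when $0<\alpha<\tfrac12$: near $t=0$ the integrand is $\sim t^{\alpha-1}$, integrable because $\alpha>0$; near $t=\infty$ it is $\sim t^{\alpha-3/2}$, integrable because $\alpha<\tfrac12$. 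Hence $\big\Vert\,|\nabla e^{-\Delta}\Delta^{-\alpha}f|\,\big\Vert_p\le \pi^{-1/2}\Gamma(\tfrac12-\alpha)\,C_p\,\Vert f\Vert_p$ for every $1<p\le2$ at once. One should finally check that the operator defined by the convergent integral above really coincides with $\nabla e^{-\Delta}\Delta^{-\alpha}$; it suffices to verify this on $f$ in the range of $\Delta^{\alpha}$ (e.g. $f=\Delta^{\alpha}e^{-\Delta}g$, $g\in\mathcal C_0^\infty$), where every factor is literally defined and all manipulations are legitimate, and then to extend by density.

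The genuine difficulty is concentrated in the ingredient $\Vert\,|\nabla e^{-t\Delta}|\,\Vert_{p\to p}\le C_p t^{-1/2}$ for $1<p<2$ on a general complete manifold: since $\nabla e^{-t\Delta}$ solves the \emph{Hodge} heat equation on $1$-forms, the Bochner/$\Gamma_2$ route is unavailable without curvature assumptions, and one is forced through the Littlewood--Paley--Stein machinery with its attendant technicalities (finiteness and monotonicity of the $p$-energy, Kato's inequality, and control of the boundary term $\int_M\Delta(|e^{-t\Delta}h|^p)\,d\mu$, which is where geodesic completeness enters). Once that estimate is in hand, the argument above is a one-line computation whose only subtlety is the bookkeeping $\alpha<\tfrac12$ ensuring convergence of the subordination integral.
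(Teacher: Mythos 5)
Your argument is correct and is essentially the paper's: the subordination identity together with the Beta-integral convergence for $0<\alpha<1/2$ is exactly Proposition \ref{eq}, and the key ingredient $\norm{\abs{\nabla e^{-t\Delta}}}_{p\to p}\leq C_p t^{-1/2}$ for $1<p\leq 2$ is exactly Proposition \ref{GP}. The only difference is that you quote this gradient bound from the literature (correctly locating all the difficulty there), whereas the paper proves it self-containedly by Stein's argument --- the pointwise identity $\left(\partial_t+\Delta\right)u^p=-p(p-1)u^{p-2}\abs{\nabla u}^2$ for $u=e^{-t\Delta}f$, H\"older's inequality, and analyticity of the heat semigroup, giving (\ref{MI}) and hence (\ref{Gp}) directly rather than via the Littlewood--Paley--Stein functional.
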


Together with known local results, this yields:
\begin{prop}\label{exo}
Let $M$ be a complete Riemannian manifold satisfying ($D_{loc}$) and ($DU\!E_{loc}$), then the quasi Riesz transform 
$\nabla (I+\Delta)^{-1/2}+\nabla e^{-\Delta}\Delta^{-\alpha}$ with $\alpha \in (0,1/2)$ is $L^p$ bounded for $1<p\leq 2$. Here 
($D_{loc}$) and ($DU\!E_{loc}$) are local versions of (\ref{doubling}) and (\ref{DUE}) to be explained below.
\end{prop}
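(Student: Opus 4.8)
The plan is to decompose the quasi Riesz transform into two pieces and handle each on its own terms. The operator $\nabla e^{-\Delta}\Delta^{-\alpha}$ with $\alpha\in(0,1/2)$ is $L^p$ bounded for $1<p\le 2$ by Theorem \ref{thm1}, with no hypotheses on $M$ needed beyond completeness. So the entire content is to show that the \emph{local Riesz transform} $\nabla(I+\Delta)^{-1/2}$ is $L^p$ bounded for $1<p\le 2$ under the local assumptions $(D_{loc})$ and $(DU\!E_{loc})$. Here I would recall that $(D_{loc})$ means the doubling property \eqref{doubling} holds for all balls of radius at most some fixed $r_0$ (equivalently, $V(x,r)\le C\,V(x,s)$ for $r_0\ge r\ge s>0$), and $(DU\!E_{loc})$ means the on-diagonal bound $p_t(x,x)\le C/V(x,\sqrt t)$ holds for $0<t\le r_0^2$; these are very mild and hold on any manifold with Ricci curvature bounded below, or more generally under a local geometric control.

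The key step is to invoke the known result — due in this form to Coulhon and Duong, cf.\ the localisation arguments in \cite{CD99,CD03} and the treatment of local Riesz transforms — that $(D_{loc})$ together with $(DU\!E_{loc})$ implies the $L^p$ boundedness of $\nabla(I+\Delta)^{-1/2}$ for $1<p\le 2$ (and in fact weak type $(1,1)$). The mechanism mirrors the proof of Theorem \ref{thm-cd}: one splits $(I+\Delta)^{-1/2}=c\int_0^\infty e^{-t}e^{-t\Delta}\,\frac{dt}{\sqrt t}$, and the exponential factor $e^{-t}$ forces the integral to concentrate on $t\lesssim 1$, i.e.\ on the range where the \emph{local} doubling and on-diagonal heat kernel bounds are available and self-improve (as in the passage from \eqref{DUE} to \eqref{UE}, but now only for small time) into a local Gaussian upper estimate $p_t(x,y)\le \frac{C}{V(x,\sqrt t)}\exp(-c\,d^2(x,y)/t)$ valid for $0<t\le r_0^2$. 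From this one obtains the pointwise kernel bound $|\nabla_x p_t(x,y)|\le \frac{C}{\sqrt t\,V(x,\sqrt t)}\exp(-c\,d^2(x,y)/t)$ for small $t$ by the standard Cauchy/analyticity argument, which is exactly the ingredient needed to run the Calderón–Zygmund / integrated Gaffney estimate machinery on the space of homogeneous type governed by the local doubling property. The contribution of large $t$ is harmless because of the decay $e^{-t}$ and the uniform $L^2$ boundedness of $\nabla e^{-t\Delta}\Delta^{-1/2}$ by spectral calculus.

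Finally I would assemble the two bounds: $\nabla(I+\Delta)^{-1/2}$ bounded on $L^p$ from the local analysis, and $\nabla e^{-\Delta}\Delta^{-\alpha}$ bounded on $L^p$ from Theorem \ref{thm1}, hence their sum — the quasi Riesz transform — is bounded on $L^p$ for $1<p\le 2$, which is the claim. The main obstacle, and the only place real work is hidden, is the verification that the local hypotheses genuinely suffice for the local Riesz transform; this requires being careful that every step in the Coulhon–Duong argument (the self-improvement of the on-diagonal bound, the time-derivative and gradient estimates, and the Calderón–Zygmund decomposition) only ever calls on heat kernel information at times $t\lesssim r_0^2$ and on doubling at scales $\lesssim r_0$. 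Since the cut-off $e^{-t}$ is built into $(I+\Delta)^{-1/2}$ this is exactly what happens, but it is worth stating precisely; I expect to cite the relevant lemmas rather than reprove them.
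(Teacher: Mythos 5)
Your proposal is correct and is exactly the paper's (implicit) argument: Theorem \ref{thm1} handles $\nabla e^{-\Delta}\Delta^{-\alpha}$ on any complete manifold, and the Coulhon--Duong local result (Theorem \ref{Rploc}) handles $\nabla(I+\Delta)^{-1/2}$ under the local hypotheses, so the sum is $L^p$ bounded for $1<p\le 2$. Note only that Theorem \ref{Rploc} as stated also assumes at-most-exponential volume growth $V(x,\lambda r)\le Ce^{c\lambda}V(x,r)$ for $r\le 1$, a hypothesis omitted both in your write-up and in the Proposition's own statement.
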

\begin{rem}
It is also equivalent to say that the operator $\nabla(\Delta^{\alpha}+\Delta^{1/2})^{-1}$ is $L^p$ bounded.
\end{rem}

A natural question would be to ask whether (\ref{Rp}) holds if we replace (\ref{DUE}) with some other kind of heat kernel 
estimates, for instance, the sub-Gaussian heat kernel upper bound introduced in \cite{HSC01,GT12} as follows:
\begin{defn}
We say that the heat kernel on $M$ satisfies the sub-Gaussian upper bound with $m>2$ if for any $x,y \in M$,
\begin{equation}
\label{ue} p_t(x,y)\leq \frac{C}{V\left(x,\rho^{-1}(t)\right)}\exp\left(-c G\br{d(x,y),t}\right),
\tag{$U\!E_m$}
\end{equation}
where
\begin{eqnarray}
\label{t} \rho(t)=\left\{ \begin{aligned}
         &t^{2},&0<t<1, \\
         & t^{m},&t\geq 1;
         \end{aligned}\right.
\end{eqnarray}
and
\begin{eqnarray}
G(r,t)=\left\{ \begin{aligned}
         &\frac{r^2}{t},&t\leq r, \\
         & \br{\frac{r^m}{t}}^{1/(m-1)},&t\geq r.
         \end{aligned}\right.
\end{eqnarray}
\end{defn}
Note that $(U\!E_2) = (U\!E)$. For $m>2$, (\ref{ue}) is neither stronger nor weaker than (\ref{UE}), see Section 3.1 below.
See also Section 3.1 for examples that satisfy (\ref{ue}).

Under (\ref{doubling}) and (\ref{ue}), we are not able to show that the Riesz transform is $L^p$ bounded for $1<p<2$. But we are 
able to treat the endpoint case $p=1$ of Proposition \ref{exo}. Indeed, we have
\begin{thm}\label{weak}
Let $M$ be a complete Riemannian manifold satisfying (\ref{doubling}) and (\ref{ue}). Then for any $0<\alpha<1/2$, the quasi Riesz 
transform $\nabla (I+\Delta)^{-1/2}+\nabla e^{-\Delta} \Delta^{-\alpha}$ is of weak type $(1,1)$.
\end{thm}

Note that one can define Hardy spaces associated with the Laplacian via square functions, which are adapted to the heat kernel 
estimates, and show that $\nabla e^{-\Delta} \Delta^{-\alpha}$ is $H^1-L^1$ bounded, see \cite{Ch}.

\medskip

The plan of this paper is as follows:

In Section 2, we describe the relations between Riesz transform, local Riesz transform, Riesz transform at infinity and quasi 
Riesz transform, and we prove Theorem \ref{thm1}. 

In Section 3, we consider Riemannian manifolds satisfying (\ref{doubling}) and (\ref{ue}). We show Theorem \ref{weak}. 

\medskip
Throughout this paper, we often write $B$ for the ball $B(x_B, r_B)$. For any given $\lambda>0$, we will write $\lambda B$ for 
the $\lambda$ dilated ball,  which is the ball with the same center as $B$ and with radius $r_{\lambda B} = \lambda r_B$. 
We denote $C_1(B)=4B$, and $C_j(B)=2^{j+1} B\setminus 2^{j}B$ for $j=2,3,\cdots$. 

The letters $c,C$ denote positive constants, which can change in different circumstances. We say that $A\lesssim B$ if there 
exists a constant $C>0$ such that $A\leq CB$. And $A\simeq B$ if there exist two positive constants $c,C$ with $c\leq C$
such that $cA\leq B\leq CA$.

\bigskip
\section{$L^p$ boundedness of quasi Riesz transforms}

In this section, unless otherwise stated, we always consider an arbitrary complete Riemannian manifold $M$ 
without any other assumptions.

We could as well consider a metric measure space setting associated with a regular and strongly local Dirichlet form, which admits 
a ``carr\'e du champ" (see \cite{BE85,GSC11}).


\subsection{Localisation of Riesz transforms}
Write the Riesz transform 
\[
\nabla \Delta^{-1/2} =\int_{0}^{\infty} \nabla e^{-t\Delta} \frac{dt}{t^{1/2}}.
\]
Alexopoulos \cite{Al02} separated the integral into local and global parts as $\int_0^1+\int_1^{\infty}$ and considered them
respectively to show the $L^p$ boundedness of the Riesz transform.

An alternative and equivalent method given in \cite{DER03} is to consider the following local Riesz transform and Riesz 
transform at infinity:  

For $1<p<\infty$, we say that the local Riesz transform is $L^p$ bounded if
\begin{equation} \label{Rp-loc} \tag{$R_p^{loc}$}
\norm{\abs{\nabla f}}_p \leq C \norm{(I+\Delta)^{1/2} f}_p, \,\,\forall f \in \mathcal C_0^{\infty}(M).
\end{equation}
and the Riesz transform at infinity is $L^p$ bounded if
\begin{equation} \label{Rp-infty} \tag{$R_p^{\infty}$}
\norm{\abs{\nabla e^{-\Delta} f}}_p \leq C \norm{\Delta^{1/2} f}_p, \,\,\forall f \in \mathcal C_0^{\infty}(M).
\end{equation}

\begin{rem}
Note that at high frequencies, $(I+\Delta)^{-1/2}\simeq \Delta ^{-1/2}$. Thus locally $\nabla(I+\Delta)^{-1/2}$ is the 
Riesz transform. Similarly, since $e^{-\Delta}\Delta^{-1/2} \simeq \Delta^{-1/2}$ when $\Delta \ll \epsilon$  (i.e. 
at low frequencies), we can regard the operator $\nabla e^{-\Delta }\Delta^{-1/2}$ as the localisation of Riesz transform at infinity. 
\end{rem}

A local version of Theorem \ref{thm-cd} says
\begin{thm}[\cite{CD99}]\label{Rploc}
Let $M$ be a complete Riemannian manifold satisfying the local doubling volume property ($D_{loc}$)
\begin{equation} \label{Dloc} \tag{$D_{loc}$}
\forall r_0>0, \exists C_{r_0} \text{ such that } V(x,2r) \leq C_{r_0} V(x,r), \,\,\forall x \in M, r \in (0,r_0),
\end{equation}
and whose volume growth at infinity is at most exponential in the sense that
\[
V(x,\lambda r) \leq C e^{c\lambda} V(x,r),\,\,\forall x\in M, \lambda>1, r\leq1.
\]
Suppose 
\begin{equation} \label{DUEloc} \tag{$DU\!E_{loc}$}
p_t(x,x) \leq \frac{C}{V(x,\sqrt t)},\,\,\forall x\in M, t\in (0,1].
\end{equation}
Then ($R_p^{loc}$) holds for $1<p\leq 2$.
\end{thm}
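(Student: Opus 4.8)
The endpoint $p=2$ is free: by the spectral theorem and integration by parts, $\norm{\abs{\nabla f}}_2^2=\langle\Delta f,f\rangle\le\langle(I+\Delta)f,f\rangle=\norm{(I+\Delta)^{1/2}f}_2^2$ for all $f\in\mathcal C_0^{\infty}(M)$. For $1<p<2$ I would follow the scheme of Theorem \ref{thm-cd}, but with everything localised to scales $\lesssim1$: by the Marcinkiewicz interpolation theorem it suffices to prove that the operator $T:=\nabla(I+\Delta)^{-1/2}$, which extends from $\mathcal C_0^{\infty}$ to $L^1\cap L^2$ by the $L^2$ bound, is of weak type $(1,1)$, i.e. $\mu\{\abs{Tf}>\lambda\}\lesssim\lambda^{-1}\norm{f}_1$ for $f\in L^1\cap L^2$, $\lambda>0$.

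The first step is to collect the heat-semigroup estimates that feed the Calder\'on--Zygmund machine. From the subordination identity $(I+\Delta)^{-1/2}=c\int_0^{\infty}e^{-t}e^{-t\Delta}\,\frac{dt}{\sqrt t}$ one gets $T=c\int_0^{\infty}e^{-t}\,\nabla e^{-t\Delta}\,\frac{dt}{\sqrt t}$, so everything is governed by the family $\nabla e^{-t\Delta}$. On an \emph{arbitrary} complete manifold this family already satisfies the $L^2$ bound $\norm{\abs{\nabla e^{-t\Delta}f}}_2\lesssim t^{-1/2}\norm{f}_2$ and the gradient Davies--Gaffney estimate
\[
\norm{\abs{\nabla e^{-t\Delta}f}}_{L^2(F)}\lesssim t^{-1/2}\,e^{-c\,d^2(E,F)/t}\,\norm{f}_{L^2(E)},\qquad\supp f\subset E,
\]
which comes from finite propagation speed for the wave equation of $\Delta$. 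Separately, under $(D_{loc})$, the at-most-exponential volume growth, and $(DU\!E_{loc})$, the on-diagonal bound self-improves (by the usual Davies--Gaffney perturbation argument, in its local form) to a local Gaussian upper bound for $p_t$ when $0<t\le1$, hence to $L^1\to L^2$ off-diagonal bounds $\norm{e^{-t\Delta}f}_{L^2(F)}\lesssim V(x_E,\sqrt t)^{-1/2}e^{-c\,d^2(E,F)/t}\norm{f}_{L^1(E)}$ for $t\le1$. Writing $\nabla e^{-t\Delta}=\nabla e^{-t\Delta/2}\circ e^{-t\Delta/2}$ and composing the two previous bounds across a dyadic partition around $E$ yields, for $0<t\le1$,
\[
\norm{\abs{\nabla e^{-t\Delta}f}}_{L^2(F)}\lesssim\frac{1}{\sqrt t\,V(x_E,\sqrt t)^{1/2}}\,e^{-c\,d^2(E,F)/t}\,\norm{f}_{L^1(E)},
\]
and $(D_{loc})$ together with the exponential growth bound let one replace $V(x_E,\sqrt t)$ by the volumes of the dilated balls that occur below.

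Now I would run the Calder\'on--Zygmund argument. Fix $f\in L^1\cap L^2$ and $\lambda>0$ and take a Calder\'on--Zygmund decomposition $f=g+\sum_i b_i$ at height $\lambda$, with balls $B_i=B(x_i,r_i)$, $\norm{g}_{\infty}\lesssim\lambda$, $\supp b_i\subset B_i$, $\int b_i\,d\mu=0$, $\norm{b_i}_1\lesssim\lambda\mu(B_i)$ (hence $\norm{b_i}_2\lesssim\lambda\,\mu(B_i)^{1/2}$), $\sum_i\mu(B_i)\lesssim\lambda^{-1}\norm{f}_1$, bounded overlap, and --- crucially for the local operator --- stopping also at scale $1$, so that every $r_i\le1$. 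The good part is disposed of by Chebyshev and the $L^2$ bound: $\mu\{\abs{Tg}>\lambda\}\le\lambda^{-2}\norm{Tg}_2^2\lesssim\lambda^{-2}\norm{g}_2^2\lesssim\lambda^{-1}\norm{f}_1$. Since $\mu\br{\bigcup_i 4B_i}\lesssim\lambda^{-1}\norm{f}_1$, the bad part reduces to $\sum_i\norm{Tb_i}_{L^1(M\setminus 4B_i)}\lesssim\norm{f}_1$. I would split $Tb_i=T(I-e^{-r_i^2\Delta})b_i+Te^{-r_i^2\Delta}b_i$. On each annulus $C_j(B_i)$ the first term is estimated by Cauchy--Schwarz, volume doubling, and an off-diagonal $L^2$ bound for $\nabla(I+\Delta)^{-1/2}(I-e^{-r_i^2\Delta})$ between $B_i$ and $C_j(B_i)$ (from the subordination formula and the Davies--Gaffney estimates, which supply geometric decay in $j$), together with $\norm{b_i}_2\lesssim\lambda\mu(B_i)^{1/2}$. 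The second term equals $\int_{r_i^2}^{\infty}e^{-t}\,\nabla e^{-t\Delta}b_i\,\frac{dt}{\sqrt t}$; on $C_j(B_i)$ one uses Cauchy--Schwarz and the $L^1\to L^2$ gradient Davies--Gaffney bound of the previous paragraph in the range $r_i^2\le t\le1$, the range $t\ge1$ being absorbed by the factor $e^{-t}$ and the bare $L^2$ bound. In every case a geometric series in $j$ remains; summing it and then summing over $i$ against $\sum_i\mu(B_i)\lesssim\lambda^{-1}\norm{f}_1$ closes the argument.

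The main obstacle is the coexistence of two unrelated scales: the operator $\nabla(I+\Delta)^{-1/2}$ has intrinsic resolution at scale $1$ (the $I$ in $I+\Delta$), whereas the Calder\'on--Zygmund balls range over all scales, and at scales $\gg1$ no Gaussian decay is available in the heat kernel. One must therefore arrange that the large-$t$ part of the subordination integral is killed purely by the weight $e^{-t}$ and the $L^2$ theory, and that the annular sums converge uniformly in $r_i$ using only $(D_{loc})$ and the exponential volume growth to control the volumes of large dilated balls; organising this bookkeeping is the delicate point.
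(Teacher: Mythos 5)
The paper offers no proof of this statement: it is quoted directly from Coulhon--Duong \cite{CD99}, so there is nothing internal to compare against. Your overall architecture (reduce to weak $(1,1)$, subordination to $\nabla e^{-t\Delta}$, Calder\'on--Zygmund decomposition with the Duong--McIntosh splitting $Tb_i=Te^{-t_i\Delta}b_i+T(I-e^{-t_i\Delta})b_i$) is indeed the scheme of \cite{CD99}, and your $p=2$ case is correct. But two steps as written would fail. The most clear-cut one is the parenthetical claim $\norm{b_i}_2\lesssim\lambda\,\mu(B_i)^{1/2}$: the Calder\'on--Zygmund decomposition controls only $\norm{b_i}_1\lesssim\lambda\mu(B_i)$, since $b_i$ is essentially $f$ minus its average on $B_i$ and $f$ is merely integrable there (a tall spike of small support inside $B_i$ makes $\norm{b_i}_2$ arbitrarily large at fixed $\lambda$). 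The whole point of the method you are localising is to avoid $L^2$ information on $b_i$: one estimates $Te^{-t_i\Delta}b_i$ via $L^1\to L^2$ off-diagonal bounds (the analogue of Corollary \ref{cor}), and $T(I-e^{-t_i\Delta})b_i$ off $2B_i$ via the uniform kernel bound $\int_{d(x,y)\geq c\,r_i}\abs{k_{t_i}(x,y)}\,d\mu(x)\leq C$ paired with $\norm{b_i}_1$, exactly as in Section 3.3 of this paper. Both of your annulus estimates --- the first term explicitly, and the large-$t$ part of the second term through the ``bare $L^2$ bound'' --- lean on the unavailable $L^2$ control of $b_i$.

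The second gap is precisely the ``bookkeeping'' you defer. (a) A global Calder\'on--Zygmund decomposition in which every $r_i\leq1$ is not automatic under $(D_{loc})$ alone: Theorem \ref{C-Z} requires global doubling, and forcing the stopping radius down to $1$ destroys the property $\mu(B_i)^{-1}\norm{b_i}_1\lesssim\lambda$; in the local theory one must give up the cancellation $\int b_i\,d\mu=0$ on the unit-scale balls and compensate with integrable decay of the kernel at scale $1$. (b) With $a=1$ fixed in $\nabla(a+\Delta)^{-1/2}$, the decay extracted from $\int_1^\infty e^{-t}e^{-c\,d^2/t}\,dt/\sqrt{t}\lesssim e^{-c'd}$ has a fixed rate $c'$, which need not dominate the exponential volume growth $e^{c\lambda}$ allowed by the hypothesis when $c$ is large. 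This is why \cite{CD99} prove the result for $\nabla(a+\Delta)^{-1/2}$ with $a$ large and then pass to $a=1$ for $1<p\leq2$ via a multiplier argument as in Theorem \ref{fc}. So the route is right in spirit, but these three points (the false $L^2$ bound on $b_i$, the construction of the scale-restricted decomposition, and the choice of $a$) are where the actual work lies, and as presented the proof does not go through.
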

Examples that satisfy the above assumptions include Riemannian manifolds with Ricci curvature bounded from below.

\medskip
We can characterise the $L^p$ boundedness of Riesz transform by the combination of (\ref{Rp-loc}) and (\ref{Rp-infty}). That is, 
\begin{thm}
Let $M$ be a complete Riemannian manifold. Then, for $1<p<\infty$, the Riesz transform $\nabla \Delta^{-1/2}$ is 
$L^p$ bounded on $M$ if and only if (\ref{Rp-loc}) and (\ref{Rp-infty}) hold.
\end{thm}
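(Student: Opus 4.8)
The plan is to prove the equivalence in both directions, exploiting the algebraic decomposition of $\Delta^{-1/2}$ into a ``low-frequency'' and a ``high-frequency'' piece. For the ``if'' direction, assume both (\ref{Rp-loc}) and (\ref{Rp-infty}) hold. The first step is to write, for $f\in\mathcal C_0^\infty(M)$,
\[
\nabla\Delta^{-1/2}f=\nabla(I+\Delta)^{-1/2}f+\nabla\br{\Delta^{-1/2}-(I+\Delta)^{-1/2}}f,
\]
and to observe that the operator $g\mapsto(I+\Delta)^{1/2}\Delta^{-1/2}g=\br{\tfrac{I+\Delta}{\Delta}}^{1/2}g$ is bounded on $L^p$ for $1<p<\infty$; this is because $\br{\tfrac{1+\lambda}{\lambda}}^{1/2}$ is, up to the bounded-holomorphic-functional-calculus framework, a nice function of $\Delta$ away from $0$ composed with the behaviour near $0$, and more concretely because $(I+\Delta)^{1/2}\Delta^{-1/2}=I+\Delta^{-1/2}\br{(I+\Delta)^{1/2}-\Delta^{1/2}}$ where the correction term is handled by writing $(I+\Delta)^{1/2}-\Delta^{1/2}=\tfrac1\pi\int_0^\infty\br{\tfrac{1}{\Delta+s}-\tfrac{1}{\Delta+1+s}}\tfrac{ds}{\sqrt s}\cdot(\text{const})$ and using analyticity of the semigroup; in any case one reduces the first summand to (\ref{Rp-loc}). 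So the first term is controlled: $\norm{\abs{\nabla(I+\Delta)^{-1/2}f}}_p\lesssim\norm{(I+\Delta)^{1/2}(I+\Delta)^{-1/2}f}_p$ is not quite what is needed; rather, apply (\ref{Rp-loc}) to $g=(I+\Delta)^{-1/2}f$ directly, giving $\norm{\abs{\nabla(I+\Delta)^{-1/2}f}}_p\lesssim\norm{f}_p\lesssim\norm{\Delta^{1/2}f}_p$? — no, the last inequality is false in general, so instead one keeps the first summand as $\norm{\abs{\nabla(I+\Delta)^{-1/2}f}}_p\lesssim\norm{(I+\Delta)^{1/2}(I+\Delta)^{-1/2}f}_p=\norm f_p$ and must then produce $\norm f_p\lesssim\norm{\Delta^{1/2}f}_p$ on the relevant spectral range. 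The clean fix is: decompose $f=e^{-\Delta}f+(I-e^{-\Delta})f$. On $(I-e^{-\Delta})f$ use the local transform and the fact that $(I+\Delta)^{1/2}(I-e^{-\Delta})\Delta^{-1/2}$ is $L^p$ bounded (its symbol $\br{\tfrac{1+\lambda}{\lambda}}^{1/2}(1-e^{-\lambda})$ is a bounded Hörmander-type multiplier since it is $O(\sqrt\lambda)$ at $0$ and $O(1)$ at $\infty$); on $e^{-\Delta}f$ use (\ref{Rp-infty}) after writing $e^{-\Delta}f=e^{-\Delta}\Delta^{-1/2}\cdot\Delta^{1/2}f$ and observing $\nabla e^{-\Delta}\Delta^{-1/2}(\Delta^{1/2}f)$ is exactly the operator in (\ref{Rp-infty}) applied to $\Delta^{1/2}f$.

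For the ``only if'' direction, assume (\ref{Rp}). Then (\ref{Rp-infty}) is immediate: $\nabla e^{-\Delta}\Delta^{-1/2}=e^{-\Delta}\cdot\nabla\Delta^{-1/2}$ — careful, $e^{-\Delta}$ acts on functions not on vector fields, so instead write $\nabla e^{-\Delta}f=\nabla e^{-\Delta}\Delta^{-1/2}(\Delta^{1/2}f)$ and bound it by $\norm{\abs{\nabla e^{-\Delta}\Delta^{-1/2}g}}_p$ with $g=\Delta^{1/2}f$; since the heat semigroup is bounded on $L^p$ and commutes with $\Delta^{-1/2}$, one has $\nabla e^{-\Delta}\Delta^{-1/2}g=\nabla\Delta^{-1/2}e^{-\Delta}g$ and (\ref{Rp}) gives $\norm{\abs{\nabla\Delta^{-1/2}e^{-\Delta}g}}_p\lesssim\norm{\Delta^{1/2}e^{-\Delta}g}_p=\norm{\Delta^{1/2}e^{-\Delta}\Delta^{1/2}f}_p\lesssim\norm{\Delta f}_p$? — that over-counts; better: $\norm{\Delta^{1/2}e^{-\Delta}g}_p\lesssim\norm g_p=\norm{\Delta^{1/2}f}_p$ using analyticity ($\Delta^{1/2}e^{-\Delta}$ is $L^p$ bounded), which is exactly (\ref{Rp-infty}). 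For (\ref{Rp-loc}), write $\nabla(I+\Delta)^{-1/2}f=\nabla\Delta^{-1/2}\cdot\Delta^{1/2}(I+\Delta)^{-1/2}f$ and apply (\ref{Rp}): $\norm{\abs{\nabla(I+\Delta)^{-1/2}f}}_p\lesssim\norm{\Delta^{1/2}\cdot\Delta^{1/2}(I+\Delta)^{-1/2}f}_p=\norm{\Delta(I+\Delta)^{-1/2}f}_p\lesssim\norm{(I+\Delta)^{1/2}f}_p$, the last step because $\Delta(I+\Delta)^{-1}$ is $L^p$ bounded (symbol $\tfrac{\lambda}{1+\lambda}\le1$).

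Throughout, the technical backbone is that bounded functions of $\Delta$ of the form $m(\lambda)$ with $m$ smooth, bounded, and with bounded derivatives of the expected order are $L^p$ bounded for $1<p<\infty$; on an arbitrary complete manifold this is guaranteed because $\Delta$ generates a symmetric contraction (sub-Markovian) semigroup, so it has a bounded holomorphic functional calculus on $L^p$ and, by the spectral-multiplier theory of Stein/Cowling for such generators, the above multipliers are admissible. The density of $\mathcal C_0^\infty(M)$ and the fact that all operators involved are a priori defined on it (with $\Delta^{1/2}f\in L^2$, etc.) lets one pass the inequalities to the stated test-function class. \textbf{The main obstacle} I anticipate is purely bookkeeping rather than conceptual: making the functional-calculus identities (splitting $\Delta^{-1/2}$ vs. $(I+\Delta)^{-1/2}$ and inserting $e^{-\Delta}$ vs. $I-e^{-\Delta}$) rigorous on $\mathcal C_0^\infty(M)$ without circular use of the very boundedness one is proving — one must be careful that each auxiliary multiplier ($\Delta^{1/2}(I+\Delta)^{-1/2}$, $\Delta(I+\Delta)^{-1}$, $\Delta^{1/2}e^{-\Delta}$, $(I-e^{-\Delta})\Delta^{-1/2}(I+\Delta)^{1/2}$) is $L^p$ bounded \emph{for the correct reason} (spectral multiplier theorem for sub-Markovian generators, or direct subordination formulae), and that no hidden assumption on the geometry of $M$ sneaks in.
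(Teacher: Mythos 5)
Your proposal is correct in substance and follows essentially the same route as the paper: for the ``if'' direction you arrive (after some backtracking) at exactly the paper's splitting $f=e^{-\Delta}f+(I-e^{-\Delta})f$, handling the second piece through the $L^p$ boundedness of the multiplier $(I+\Delta)^{1/2}(I-e^{-\Delta})\Delta^{-1/2}$ via Cowling's functional calculus for sub-Markovian generators, and for the ``only if'' direction you factor through $\Delta^{1/2}(I+\Delta)^{-1/2}$ and the analyticity/contractivity of the semigroup, just as in the text. The one blemish is a bookkeeping slip in your derivation of $(R_p^{loc})$: applying $(R_p)$ to $(I+\Delta)^{-1/2}f$ produces a single factor of $\Delta^{1/2}$, i.e. the bound $\norm{\Delta^{1/2}(I+\Delta)^{-1/2}f}_p\lesssim\norm{f}_p$, not the two factors $\norm{\Delta(I+\Delta)^{-1/2}f}_p$ you wrote; your chain still ends at a true inequality, but that intermediate step does not follow from $(R_p)$ as stated.
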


The proof relies on the following multiplier theorem due to Cowling:
\begin{thm}[\cite{Cow83}]\label{fc}
Let $M$ be a measure space. Let $L$ be the generator of a bounded analytic semigroup on $L^p(M)$ for $1<p<\infty$ such that 
$e^{-tL}$ is positive, contractive and sub-Markovian for $t>0$. Suppose that $F$ is a bounded holomorphic function in the sector
$\Sigma_{\pi/2}=\{z \in \mathbb C\setminus \{0\}: |\arg (z)| <\pi/2\}$. Then
\[
\norm{F(L) f}_p \leq C \norm{f}_p,\,\,\forall f \in L^p(M),
\]
where $C$ depends on $p$, $\sigma$ and $F$.
\end{thm}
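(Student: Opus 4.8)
The plan is to reduce the statement to a quantitative bound on the imaginary powers $L^{iy}$ and then recover $F(L)$ from those powers by Mellin inversion, the holomorphy of $F$ in the sector $\Sigma_{\sigma}$ supplying exactly the decay needed to make the resulting operator‑norm integral converge on $L^p$. First I would dispose of the behaviour of $F$ at $0$ and $\infty$: since $e^{-tL}$ is a bounded $C_0$‑semigroup on $L^p$ one has $e^{-tL}\to 0$ strongly as $t\to\infty$ on $\overline{\operatorname{ran}L}$ and $e^{-tL}\to I$ strongly as $t\to 0$, so the limiting values $F(0^+)$ and $F(\infty)$ contribute only bounded operators (a multiple of $I$ and of the projection onto $\ker L$, if present); after subtracting them it suffices to treat $F$ holomorphic and bounded in $\Sigma_{\sigma}$ with a fixed decay $|F(\lambda)|\le C\min(|\lambda|^{a},|\lambda|^{-a})$ for some $a>0$, the general case following by an approximation argument (e.g. replacing $F(\lambda)$ by $F(\lambda)\br{\lambda/(1+\lambda)^{2}}^{\varepsilon}$ and letting $\varepsilon\to0$, using uniform boundedness of the approximants). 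For such $F$ the Mellin transform $\widetilde F(y)=\int_{0}^{\infty}F(t)\,t^{iy}\,\tfrac{dt}{t}$ is absolutely convergent, and Mellin inversion combined with the holomorphic functional calculus of the sectorial operator $L$ on $L^p$ gives
\[
F(L)=\frac{1}{2\pi}\int_{-\infty}^{\infty}\widetilde F(y)\,L^{-iy}\,dy .
\]

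Next I would extract the decay of $\widetilde F$. Using that $F$ is holomorphic and bounded in $\Sigma_{\sigma}$, rotate the contour defining $\widetilde F(y)$ onto a ray $t=se^{i\phi}$, $s>0$, with $\phi\in(\pi/2,\sigma)$ when $y>0$ and $\phi\in(-\sigma,-\pi/2)$ when $y<0$; since $\tfrac{dt}{t}=\tfrac{ds}{s}$ and $t^{iy}=s^{iy}e^{-\phi y}$, one gets $|\widetilde F(y)|\le e^{-\phi y}\int_{0}^{\infty}|F(se^{i\phi})|\,\tfrac{ds}{s}\le C_{F,\phi}\,e^{-\phi_{0}|y|}$ for a fixed $\phi_{0}>\pi/2$, i.e. exponential decay at a rate strictly larger than $\pi/2$. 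Granting the imaginary‑power estimate below, one then chooses the rotation angle so that $\phi_{0}$ exceeds the exponential growth rate of $\|L^{-iy}\|_{p\to p}$, whence $\int|\widetilde F(y)|\,\|L^{-iy}\|_{p\to p}\,dy<\infty$ and $\norm{F(L)f}_p\le C_{p,\sigma,F}\norm{f}_p$, which is precisely the asserted bound with $C$ depending on $p$, $\sigma$ and $F$.

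The heart of the matter is therefore the estimate $\|L^{iy}\|_{L^p\to L^p}\le C_{p,\varepsilon}\,e^{(\pi/2+\varepsilon)|y|}$ for every $1<p<\infty$ and $\varepsilon>0$. I would first establish it on $L^2$: contractivity of $e^{-tL}$ on the Hilbert space $L^2$ makes $L$ $m$‑accretive there, hence $L$ admits the classical von Neumann–type $H^{\infty}$ calculus in any sector $\Sigma_{\pi/2+\varepsilon}$, giving $\|L^{iy}\|_{2\to2}\le e^{(\pi/2)|y|}$. To pass to $L^p$ I would run a Stein complex‑interpolation argument on the analytic family $w\mapsto L^{w}$, using on one side this $L^2$ bound and on the other the positivity and contractivity of $e^{-tL}$ on all $L^q$ — concretely, Hopf's maximal ergodic theorem, which yields $\norm{\sup_{t>0}\abs{e^{-tL}f}}_p\lesssim\norm{f}_p$ for $1<p<\infty$ and controls the "non‑analytic" directions — together with the bounded analytic extension $e^{-zL}$, whose sector of boundedness on $L^p$ has half‑angle tending to $\pi/2$ as $p\to2$ (obtained by interpolating the $L^2$‑boundedness for $|\arg z|<\pi/2$ against the small sector of analyticity on some $L^{p_{0}}$). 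Combining these inputs produces the rate $\pi/2+\varepsilon$ on every $L^p$; alternatively one may first subordinate to the Poisson‑type semigroup $e^{-t\sqrt L}$, again positive and contractive, to streamline the angle bookkeeping.

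I expect the genuinely delicate point to be this last step — forcing the exponential rate of $\|L^{iy}\|_{p\to p}$ down to $\pi/2$ (up to $\varepsilon$) uniformly in $p\in(1,\infty)$ — because no $L^{1}$/$L^{\infty}$ endpoint is available for imaginary powers, so one cannot simply interpolate $L^{iy}$ itself but must interpolate the semigroup and feed in the maximal ergodic theorem. Everything else (the endpoint reductions at $0$ and $\infty$, the Mellin representation, the contour rotation giving decay of $\widetilde F$) is routine once $L$ is known to be sectorial on each $L^p$ with angle $<\pi/2$.
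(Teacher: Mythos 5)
First, a point of reference: the paper does not prove this statement at all --- it is quoted from Cowling \cite{Cow83} --- so the comparison can only be with Cowling's argument and its modern descendants. Your overall strategy (establish bounded imaginary powers $\norm{L^{iy}}_{p\to p}\lesssim e^{(\pi/2+\varepsilon)|y|}$, then recover $F(L)$ from a Mellin-type representation, the holomorphy of $F$ on $\Sigma_\sigma$ with $\sigma>\pi/2$ giving the decay of $\widetilde F$) is indeed the classical route and is close in spirit to what Cowling does. But two steps are genuine gaps, not routine details.

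(a) The passage from decaying $F$ to a general bounded holomorphic $F$ fails as written. Your bound for a ``nice'' $F$ is $\norm{F(L)}_{p\to p}\lesssim\int_{\mathbb R}|\widetilde F(y)|\,\norm{L^{-iy}}_{p\to p}\,dy$ with $|\widetilde F(y)|\le e^{-\phi_0|y|}\int_0^\infty|F(se^{i\phi})|\frac{ds}{s}$; the last factor is not controlled by $\sup_{\Sigma_\sigma}|F|$, and for your regularisation $F_\varepsilon(\lambda)=F(\lambda)\br{\lambda/(1+\lambda)^2}^{\varepsilon}$ it blows up like $\varepsilon^{-1}$. So ``uniform boundedness of the approximants'' is precisely what your argument does not deliver, and without it the limiting argument gives nothing. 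This is not a technicality: bounded imaginary powers do not imply a bounded $H^\infty$ calculus in general; on $L^p$ they do, but the known proofs use the UMD property together with a vector-valued multiplier or transference argument (Hieber--Pr\"uss), which is absent from your sketch. As written, your proof covers only $F$ with some decay at $0$ and $\infty$ (enough, after subtracting constants, for the way the theorem is used in this paper, but weaker than the quoted statement). (b) The central estimate $\norm{L^{iy}}_{p\to p}\le C_{p,\varepsilon}e^{(\pi/2+\varepsilon)|y|}$ is asserted rather than proved: the combination of the Hopf maximal ergodic theorem, the analytic continuation of $e^{-zL}$ on $L^p$ into sectors of half-angle tending to $\pi/2$, and Stein complex interpolation is exactly the content of Cowling's paper (building on Stein), not a routine step one can wave at; note also that the stated hypotheses give positivity and contractivity but no self-adjointness, and your $L^2$ input uses $L^2$-contractivity, whereas the proof under exactly these hypotheses on a fixed $L^p$ usually proceeds instead through the Akcoglu--Sucheston/Fendler dilation of a positive $L^p$-contraction semigroup and Coifman--Weiss transference. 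In short, the skeleton is the right one, but the two load-bearing steps --- the uniform $H^\infty$ bound and the imaginary-power estimate --- are missing, and the approximation step as formulated would fail.
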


\noindent{\bf Proof of Theorem 2.3:}
First assume (\ref{Rp}). For any $f \in C_0^{\infty}(M)$, on the one hand, we have
\begin{eqnarray*}
\norm{\abs{\nabla (I+\Delta)^{-1/2} f}}_p &=& \norm{\abs{\nabla \Delta^{-1/2} \Delta^{1/2} (I+\Delta)^{-1/2}f}}_p
\\ &\leq& C \norm{\Delta^{1/2} (I+\Delta)^{-1/2}f}_p 
\leq C \Vert f\Vert_p.
\end{eqnarray*}
Here the last inequality follows from Theorem \ref{fc}.

On the other hand, (\ref{Rp-infty}) holds obviously due to the $L^p$ boundedness of the heat semigroup. In fact,
\[
\norm{\abs{\nabla e^{-\Delta}\Delta^{-1/2} f}}_p \leq C \norm{e^{-\Delta}f}_p \leq C \Vert f\Vert_p.
\]

Conversely, assume (\ref{Rp-loc}) and (\ref{Rp-infty}), then
\begin{eqnarray*}
\norm{\abs{\nabla f}}_p 
&\leq& 
\norm{\abs{\nabla e^{-\Delta} f}}_p +\norm{\abs{\nabla (I-e^{-\Delta}) f}}_p
\\ &\lesssim&
\norm{\Delta^{1/2} f}_p +\norm{\abs{\nabla (I+\Delta)^{-1/2} (I+\Delta)^{1/2} (I-e^{-\Delta}) \Delta^{-1/2} \Delta^{1/2} f}}_p
\\ &\lesssim&
\norm{\Delta^{1/2} f}_p +\norm{(I+\Delta)^{1/2} (I-e^{-\Delta}) \Delta^{-1/2} \Delta^{1/2} f}_p
\\ &\lesssim& 
\norm{\Delta^{1/2} f}_p.
\end{eqnarray*}
Here the last inequality is due to Theorem \ref{fc}.
\hfill{$\Box$}

\bigskip

We shall now introduce a variation of the Riesz transform at infinity. Let $0<\alpha<1/2$. We say that $M$ satisfies 
($R_p^{\infty,\alpha}$) if
\begin{equation} \label{Rp-inftya} \tag{$R_p^{\infty,\alpha}$}
\norm{\abs{\nabla e^{-\Delta} f}}_p \leq C\norm{\Delta^{\alpha} f}_p, \,\,\forall f \in \mathcal C_0^{\infty}(M).
\end{equation}
Together with the local Riesz transform, it will give us a notion of quasi Riesz transform.

Note that (\ref{Rp-infty}) implies (\ref{Rp-inftya}). Indeed,
\begin{eqnarray*}
\norm{\abs{\nabla e^{-\Delta} \Delta^{-\alpha} f}}_p 
&\leq& 
\norm{\abs{\nabla e^{-\Delta/2} \Delta^{-1/2} e^{-\Delta/2} \Delta^{1/2-\alpha} f}}_p 
\\ &\leq&
C \norm{e^{-\Delta/2} \Delta^{1/2-\alpha} f}_p 
\\ &\leq&
C\Vert f\Vert_p.
\end{eqnarray*}

\bigskip
\subsection{Equivalence of (\ref{Gp}) and (\ref{MI})}

For any $1< p<\infty$, let us considering the following $L^p$ interpolation or multiplicative inequality:
\begin{equation}\label{MI}\tag{$M\!I_p$}
\norm{\abs{\nabla f}}_p^2 \leq C \norm{\Delta f}_p \norm{f}_p,\,\,
\forall f\in \mathcal C_{0}^{\infty}(M),
\end{equation}
as well as the following $L^p$ estimate for the gradient of the heat semigroup:
\begin{equation} \label{Gp} \tag{$G_p$}
\norm{\abs{\nabla e^{-t\Delta }}}_{p\to p} \leq \frac{C_p}{\sqrt t}, \,\,\forall t>0.
\end{equation}

Recall that (\ref{Rp}) implies (\ref{Gp}) and (\ref{MI}). In fact, (\ref{Gp}) and (\ref{MI}) are equivalent for any $1< p<\infty$.
\begin{prop}[\cite{CS10,Du08}] 
\label{GpMI}
Let $M$ be a complete Riemannian manifold. Then, for any $1< p<\infty$, (\ref{Gp}) is equivalent to (\ref{MI}).
\end{prop}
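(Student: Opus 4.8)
The plan is to establish both implications by testing the operator $\nabla e^{-t\Delta}$ against functions of the form $f = e^{-t\Delta}g$ (or $f=g$) and exploiting the semigroup property together with the spectral identity relating $\|\nabla e^{-t\Delta}g\|_2$-type quantities to $\|\Delta e^{-t\Delta}g\|$. First I would prove that (\ref{Gp}) implies (\ref{MI}). Given $f\in\mathcal C_0^\infty(M)$ and $t>0$, write $\nabla f = \nabla e^{-t\Delta}f + \nabla(I-e^{-t\Delta})f$. The first term is controlled by (\ref{Gp}): $\|\,|\nabla e^{-t\Delta}f|\,\|_p \le C_p t^{-1/2}\|f\|_p$. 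For the second term, use $I-e^{-t\Delta} = \int_0^t \Delta e^{-s\Delta}\,ds$, hence $\nabla(I-e^{-t\Delta})f = \int_0^t \nabla e^{-s\Delta}(\Delta f)\,ds$, and again by (\ref{Gp}), $\|\,|\nabla(I-e^{-t\Delta})f|\,\|_p \le \int_0^t C_p s^{-1/2}\,ds\,\|\Delta f\|_p = 2C_p\sqrt t\,\|\Delta f\|_p$. Combining, $\|\,|\nabla f|\,\|_p \le C_p(t^{-1/2}\|f\|_p + 2\sqrt t\,\|\Delta f\|_p)$ for every $t>0$; optimising in $t$ (choosing $t \simeq \|f\|_p/\|\Delta f\|_p$) yields (\ref{MI}).

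For the converse, assume (\ref{MI}) and aim for (\ref{Gp}). The natural move is to apply (\ref{MI}) to $f = e^{-t\Delta}g$: this gives
\[
\norm{\abs{\nabla e^{-t\Delta}g}}_p^2 \le C\,\norm{\Delta e^{-t\Delta}g}_p\,\norm{e^{-t\Delta}g}_p .
\]
By analyticity of the heat semigroup on $L^p$ (valid for $1<p<\infty$; the endpoints $p=1,\infty$ need the sup-contractivity and a separate argument), $\norm{\Delta e^{-t\Delta}g}_p \le C t^{-1}\norm{g}_p$ and $\norm{e^{-t\Delta}g}_p\le\norm{g}_p$, so the right-hand side is $\le C t^{-1}\norm{g}_p^2$, giving $\norm{\abs{\nabla e^{-t\Delta}}}_{p\to p}\le C t^{-1/2}$, which is exactly (\ref{Gp}).

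The main obstacle is the endpoint cases $p=1$ and $p=\infty$, where the heat semigroup is not analytic on $L^p$ in the usual sense, so the bound $\|\Delta e^{-t\Delta}\|_{p\to p}\lesssim t^{-1}$ is not immediately available. Here one must instead argue by a bootstrap/self-improvement: apply (\ref{MI}) to $f=e^{-t\Delta}g$ and use the crude bound $\|\Delta e^{-t\Delta}g\|_p = \|\partial_t e^{-t\Delta}g\|_p$ together with $\|\nabla e^{-s\Delta}h\|_p$ bounds on shorter time intervals, or more efficiently iterate the inequality $\|\nabla e^{-t\Delta}\|_{p\to p}\le (C/\sqrt t)^{?}$ starting from the trivial finiteness on compact time intervals. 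Concretely, write $\Delta e^{-t\Delta} = -(\nabla e^{-t\Delta/2})^*\nabla e^{-t\Delta/2}$ in a weak sense and close the loop; alternatively invoke the known fact (as the cited references \cite{CS10,Du08} do) that (\ref{MI}) at $p$ self-improves to control $\|\Delta e^{-t\Delta}\|_{p\to p}$ via a Landau–Kolmogorov type iteration. I would present the $1<p<\infty$ case cleanly via analyticity and then indicate the iteration needed to cover $p\in\{1,\infty\}$, citing \cite{CS10,Du08} for the detailed bootstrap.
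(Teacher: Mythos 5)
Your proposal is correct and follows essentially the same route as the paper: for (\ref{Gp}) $\Rightarrow$ (\ref{MI}) the paper uses the identity $f=e^{-t\Delta}f+\int_0^t\Delta e^{-s\Delta}f\,ds$ and optimises in $t$, and for the converse it substitutes $f=e^{-t\Delta}g$ into (\ref{MI}) and invokes analyticity of the semigroup, exactly as you do. Your remark about the endpoints $p=1,\infty$ is a fair observation — the paper simply asserts analyticity on $L^p$ without addressing those cases, deferring to \cite{CS10,Du08} as you also suggest.
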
  
See \cite{CS10} for more information about the relations between (\ref{MI}), the Riesz 
transforms, and estimates of the derivative of the heat kernel. For the sake of completeness, we give a proof here.

\begin{proof} 
First assume (\ref{MI}). Substituting $f$ by $e^{-t\Delta}f$ in (\ref{MI}) yields
\[
\norm{\abs{\nabla e^{-t\Delta}f}}_{p}^2 \leq C \norm{\Delta e^{-t\Delta}f}_p \norm{e^{-t\Delta}f}_p.
\]
Since the heat semigroup is analytic on $L^p(M)$, we obtain
\[
\norm{\abs{\nabla e^{-t\Delta}f}}_{p} \leq C t^{-1/2}\Vert f\Vert_p.
\]

Conversely assume (\ref{Gp}). For any $f\in C_0^{\infty}(M)$, write the identity
\[
f=e^{-t\Delta}f+\int_{0}^{t}\Delta e^{-s\Delta}f ds,\,\forall t>0.
\]
Then (\ref{Gp}) yields
\begin{eqnarray*}
\norm{\abs{\nabla f}}_p &\leq& 
C\norm{\abs{ \nabla e^{-t\Delta}f}}_p+\left\Vert\int_{0}^{t} \abs{\nabla \Delta e^{-s\Delta}f} ds\right\Vert_p
\\ &\leq& 
C t^{-1/2}\Vert f\Vert_p+\int_{0}^{t} \norm{\abs{\nabla e^{-s\Delta}\Delta f}}_p ds
\\ &\leq& 
C t^{-1/2}\Vert f\Vert_p+C t^{1/2} \Vert \Delta f\Vert_p.
\end{eqnarray*}
Taking $t=\Vert f\Vert_p \Vert \Delta f\Vert_p^{-1}$, we get (\ref{MI}).
\end{proof}

Under (\ref{Gp}) (or equivalently (\ref{MI})), the quasi Riesz transform at infinity is $L^p$ bounded: 
\begin{prop}\label{eq}
Let $M$ be a complete Riemannian manifold satisfying (\ref{Gp}) for some $p\in (1,\infty)$. Then for any $\alpha \in (0,1/2)$, 
(\ref{Rp-inftya}) holds.
\end{prop}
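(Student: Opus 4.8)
The plan is to deduce $(R_p^{\infty,\alpha})$, i.e.\ the bound
$\norm{\abs{\nabla e^{-\Delta}\Delta^{-\alpha}f}}_p \lesssim \norm{f}_p$,
directly from the heat semigroup gradient estimate $(G_p)$, using the subordination-type identity that writes the fractional power $\Delta^{-\alpha}$ as an average of the heat semigroup. Concretely, for $0<\alpha<1/2$ one has, up to a positive constant depending on $\alpha$,
\[
\Delta^{-\alpha} = c_\alpha \int_0^\infty e^{-s\Delta}\, s^{\alpha-1}\, ds
\]
on a suitable dense class (valid on $\mathcal C_0^\infty(M)$ modulo harmonic-functions issues, or interpreted via spectral theory on the range of $\Delta$; this is routine). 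Composing with $\nabla e^{-\Delta}$ and splitting the $s$-integral at $s=1$ is the main mechanism.

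First I would write
\[
\nabla e^{-\Delta}\Delta^{-\alpha} f = c_\alpha \int_0^\infty \nabla e^{-(1+s)\Delta} f\, s^{\alpha-1}\, ds,
\]
and estimate the $L^p$ norm by Minkowski's integral inequality:
\[
\norm{\abs{\nabla e^{-\Delta}\Delta^{-\alpha} f}}_p \lesssim \int_0^\infty \norm{\abs{\nabla e^{-(1+s)\Delta} f}}_p\, s^{\alpha-1}\, ds.
\]
Then I apply $(G_p)$, which gives $\norm{\abs{\nabla e^{-(1+s)\Delta}}}_{p\to p} \lesssim (1+s)^{-1/2}$, so that the right-hand side is controlled by $\norm{f}_p \int_0^\infty (1+s)^{-1/2} s^{\alpha-1}\, ds$. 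The integral converges: near $s=0$ the factor $s^{\alpha-1}$ is integrable since $\alpha>0$, and near $s=\infty$ the integrand behaves like $s^{\alpha-1-1/2}=s^{\alpha-3/2}$, which is integrable precisely because $\alpha<1/2$. This is exactly where the hypothesis $\alpha\in(0,1/2)$ is used, and it matches the constraint appearing in Theorem \ref{thm1}.

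Alternatively, and perhaps more cleanly given what has already been proved, I would invoke Proposition \ref{GpMI} to pass from $(G_p)$ to the multiplicative inequality $(MI_p)$, and then run the argument already displayed in the paragraph following $(R_p^{\infty,\alpha})$: namely factor $\nabla e^{-\Delta}\Delta^{-\alpha} = \br{\nabla e^{-\Delta/2}\Delta^{-1/2}}\br{e^{-\Delta/2}\Delta^{1/2-\alpha}}$, bound the first factor using $(G_p)$ together with analyticity of the semigroup (this is the content of $(G_p)$ essentially yielding an $L^p$-bounded ``Riesz transform at infinity'' via the same subordination step at a single fixed time), and bound the second factor $e^{-\Delta/2}\Delta^{1/2-\alpha}$ on $L^p$ by analyticity since $1/2-\alpha>0$. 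The main obstacle in either route is purely technical: justifying the subordination identity and the interchange of $\nabla$ with the integral on $\mathcal C_0^\infty(M)$, and handling the kernel of $\Delta$ (low-frequency part) so that $\Delta^{-\alpha}f$ makes sense — but since we only need the inequality for $f\in\mathcal C_0^\infty(M)$ and all operators in sight are already known to be $L^p$-bounded building blocks, this causes no real difficulty. I expect no genuinely hard step; the proposition is essentially a convergent-integral bookkeeping consequence of $(G_p)$, with the sharp exponent range $(0,1/2)$ falling out of the convergence of $\int_0^\infty (1+s)^{-1/2}s^{\alpha-1}\,ds$.
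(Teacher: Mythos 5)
Your first argument is exactly the paper's proof: the identity $\nabla e^{-\Delta}\Delta^{-\alpha}f=\int_0^\infty \nabla e^{-(1+t)\Delta}f\,t^{\alpha-1}\,dt$, Minkowski's inequality, $(G_p)$ applied at time $1+t$, and convergence of $\int_0^\infty (1+t)^{-1/2}t^{\alpha-1}\,dt$ for $\alpha\in(0,1/2)$. The proposal is correct and takes essentially the same approach as the paper.
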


\begin{proof}
For any $f\in \mathcal C_{0}^{\infty}(M)$, write
\[
\nabla e^{-\Delta}\Delta^{-\alpha}f=\int_0^{\infty}\nabla e^{-(1+t)\Delta}f \frac{dt}{t^{1-\alpha}}.
\]
Since (\ref{Gp}) holds, we have
\[
\norm{\abs{\nabla e^{-\Delta}\Delta^{-\alpha}f}}_p 
\leq 
\int_0^{\infty}\norm{\abs{\nabla e^{-(1+t)\Delta}f}}_p \frac{dt}{t^{1-\alpha}}
\leq 
C_p \Vert f\Vert_p \int_0^{\infty} \frac{dt}{(t+1)^{1/2}t^{1-\alpha}},
\]
which obviously converges for $\alpha \in (0,1/2)$. Therefore we obtain (\ref{Rp-inftya}) for $\alpha \in (0,1/2)$.
\end{proof}

\bigskip

\subsection{$L^p$ boundedness of quasi Riesz transform for $1<p\leq2$}
This part is inspired by \cite{CD03} and \cite{Du08}, where (\ref{MI}) and (\ref{Gp}) for $1<p\leq 2$ were 
shown on manifolds and graphs respectively.

In the following, we will give a different proof of (\ref{MI}) and (\ref{Gp}) on Riemannian manifolds. More precisely, we will directly show 
(\ref{MI}) and (\ref{Gp}) by the method which is used in \cite{St70} to prove the $L^p$
boundedness of the Littlewood-Paley-Stein function (see also \cite[Theorem 1.2]{CDL03}). In \cite[Theorem 1.3]{Du08},  
an analogue proof was given in the discrete case.

\begin{prop}\label{GP}
Let $M$ be a complete Riemannian manifold. Then (\ref{MI}) and (\ref{Gp}) hold for $1<p\leq 2$.
\end{prop}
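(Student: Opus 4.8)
The plan is to establish the multiplicative inequality (\ref{MI}) for $1<p\le 2$; by Proposition \ref{GpMI} this is equivalent to (\ref{Gp}), so either one suffices. The case $p=2$ is elementary (for instance $\norm{\,|\nabla g|\,}_2^2=\langle\Delta g,g\rangle\le\norm{\Delta g}_2\norm{g}_2$), so I concentrate on $1<p<2$. The engine is an $L^p$ bound for the Littlewood--Paley--Stein square function
\[ \vec H(f)(x):=\Bigl(\int_0^\infty|\nabla e^{-s\Delta}f(x)|^2\,ds\Bigr)^{1/2},\qquad f\in\mathcal C_0^\infty(M), \]
namely $\norm{\vec H(f)}_p\le C_p\norm{f}_p$, which I would prove by the method of \cite{St70}. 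Since $\vec H$ is sublinear, a density argument reduces this to $0\le f\in\mathcal C_0^\infty(M)$, $f\not\equiv 0$; then $u(x,s):=e^{-s\Delta}f(x)>0$ on $M\times(0,\infty)$ by the strict positivity of the heat kernel. Since $u$ solves $\partial_s u+\Delta u=0$, a direct computation gives the pointwise identity $-(\partial_s+\Delta)(u^p)=p(p-1)\,u^{p-2}|\nabla u|^2\ge 0$, whence, as $2-p>0$,
\[ |\nabla u(x,s)|^2=\frac{u(x,s)^{2-p}}{p(p-1)}\bigl(-(\partial_s+\Delta)(u^p)\bigr)(x,s)\le\frac{(\mathcal Mf(x))^{2-p}}{p(p-1)}\bigl(-(\partial_s+\Delta)(u^p)\bigr)(x,s), \]
where $\mathcal Mf:=\sup_{s>0}e^{-s\Delta}f$ is the heat maximal function.

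Integrating in $s$, raising to the power $p/2$, integrating over $M$, and applying Hölder's inequality with exponents $\tfrac{2}{2-p}$ and $\tfrac{2}{p}$ gives
\[ \norm{\vec H(f)}_p^p\le\frac{1}{(p(p-1))^{p/2}}\,\norm{\mathcal Mf}_p^{p(2-p)/2}\Bigl(\int_M\int_0^\infty\bigl(-(\partial_s+\Delta)(u^p)\bigr)(x,s)\,ds\,d\mu(x)\Bigr)^{p/2}. \]
Since the integrand is nonnegative, Tonelli's theorem shows the double integral equals $\int_0^\infty\bigl(-\tfrac{d}{ds}\norm{e^{-s\Delta}f}_p^p\bigr)\,ds\le\norm{f}_p^p$: the $\Delta$-term integrates to zero over $M$, and the $L^p$-norm is nonincreasing under the semigroup. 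Together with Stein's maximal theorem $\norm{\mathcal Mf}_p\le C_p\norm{f}_p$ \cite{St70} --- valid for the symmetric submarkovian heat semigroup, and genuinely false at $p=1$ --- this yields $\norm{\vec H(f)}_p^p\lesssim\norm{f}_p^{p(2-p)/2+p^2/2}=\norm{f}_p^p$. The customary technicalities --- justifying the $s$-integration by parts and the differentiation under the integral sign via the decay of $e^{-s\Delta}f$, the vanishing of $\int_M\Delta(u^p)$ (carried out on an exhaustion), and, for sign-changing data, the regularization replacing $u$ by $(u^2+\varepsilon^2)^{1/2}$ followed by $\varepsilon\to0$ --- are routine and follow the models of \cite{St70,CDL03,Du08}.

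Finally, to pass from $\vec H$ to (\ref{MI}): for $g\in\mathcal C_0^\infty(M)$ and any $t>0$, the identity $g=e^{-s\Delta}g+\int_0^s e^{-\sigma\Delta}(\Delta g)\,d\sigma$ (used for each $s\le t$), followed by Cauchy--Schwarz and averaging over $s\in(0,t)$, yields the pointwise bound
\[ |\nabla g(x)|\le t^{-1/2}\vec H(g)(x)+t^{1/2}\vec H(\Delta g)(x). \]
Taking $L^p$ norms and using the square-function estimate for $g$ and for $\Delta g\in\mathcal C_0^\infty(M)$ gives $\norm{\,|\nabla g|\,}_p\le C_p\bigl(t^{-1/2}\norm{g}_p+t^{1/2}\norm{\Delta g}_p\bigr)$; the choice $t=\norm{g}_p/\norm{\Delta g}_p$ then produces (\ref{MI}), and (\ref{Gp}) follows from Proposition \ref{GpMI}. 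The main obstacle is the first step: the pointwise differential inequality genuinely needs $1<p\le2$ (for $p>2$ one cannot control $|\nabla u|^2$ this way --- consistent with the known failure of stronger gradient bounds at large $p$), and the argument rests on the $L^p$-boundedness of the heat maximal function, which forces $p>1$; the only delicate technical point is the measure-theoretic care near $\{u=0\}$ for sign-changing data.
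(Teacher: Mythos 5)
Your proof is correct, but it takes a genuinely different route from the paper's. Both arguments start from the same pointwise identity $-(\partial_s+\Delta)(u^p)=p(p-1)u^{p-2}\abs{\nabla u}^2$ for $u=e^{-s\Delta}f$ with $f\geq0$, and both apply H\"older with exponents $\tfrac{2}{2-p}$ and $\tfrac2p$; the difference is where the time integration happens. The paper works at a \emph{fixed} time $t$: it pairs $u(\cdot,t)^{2-p}$ with $\norm{u(\cdot,t)}_p$ itself via H\"older, evaluates $-\int_M(\partial_t+\Delta)u^p\,d\mu=\int_M p\,u^{p-1}\Delta u\,d\mu\le C\norm{u(\cdot,t)}_p^{p-1}\norm{\Delta u(\cdot,t)}_p$ by a second H\"older, and so obtains $\norm{\abs{\nabla u(\cdot,t)}}_p^2\le C\norm{u(\cdot,t)}_p\norm{\Delta u(\cdot,t)}_p$ for every $t$; (\ref{MI}) follows by letting $t\to0$ and (\ref{Gp}) by analyticity, with no maximal function and no square function. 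You instead integrate the differential inequality over all $s\in(0,\infty)$, which forces you to dominate $u(x,s)^{2-p}$ uniformly in $s$ by $(\mathcal Mf(x))^{2-p}$ and hence to invoke Stein's maximal ergodic theorem for submarkovian semigroups; the payoff is the stronger conclusion that the vertical square function $\vec H$ is $L^p$-bounded, from which you must then recover (\ref{MI}) by the averaging trick $\abs{\nabla g}\le t^{-1/2}\vec H(g)+t^{1/2}\vec H(\Delta g)$ (that step is sound). So your argument is longer and uses one more nontrivial input than the paper's direct fixed-time computation, but it proves more along the way; the technical points you defer (reduction to nonnegative data, the vanishing of $\int_M\Delta(u^p)\,d\mu$ on a noncompact manifold) are glossed over in the paper's proof as well.
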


\begin{proof}
Assume that $f\in C_{0}^{\infty}(M)$ is non-negative and not identically zero. Set 
$u(x,t)=e^{-t\Delta}f(x)$. Then $u$ is smooth and positive everywhere. Moreover, $u(\cdot,t)$ and $\Delta u(\cdot,t)$ is $L^p$ bounded for 
$1<p<\infty$.

For any $1<p\leq 2$, we have
\begin{eqnarray*}
&& \left( \frac{\partial}{\partial t}+\Delta \right) u^p(x,t) 
\\&=& 
p u^{p-1}(x,t) \left(\frac{\partial}{\partial t} +\Delta\right) u(x,t) - p(p-1)u^{p-2}(x,t)|\nabla u(x,t)|^{2}
\\&=&
- p(p-1)u^{p-2}(x,t)|\nabla u(x,t)|^{2}.
\end{eqnarray*}
Define $J(x,t)= -\left( \frac{\partial}{\partial t}+\Delta \right) u^p(x,t)$, then
\begin{equation}\label{eqj}
|\nabla u(x,t)|^{2}=\frac{1}{p(p-1)}u^{2-p}(x,t)J(x,t).
\end{equation}

We first construct a sequence of functions $\{\phi_n\}$ as follows (see for example \cite{Da90}):
let $\eta(t), 0\leq t<\infty$ be a non-increasing smooth function such that $\eta(t)=1$ for $0<t<1$ and $\eta(t)=0$ for
$2\leq t<\infty$. Define $$\phi_n(x)=\eta(d(x,x_0)/n),$$ for any fixed point $x_0\in M$. 
Then 
\begin{itemize}
\item $0\leq \phi_n\leq 1$ and $\phi_n$ is a sequence of continuous functions which converges monotonically to $1$.
\item $\phi_n\in C_0^1(M)$ and 
\[
|\nabla \phi_n(x)| \leq \frac{\norm{\eta'}_{\infty}}{n},\,\, \forall x\in M.
\]
\end{itemize}

Then it follows from the Green's formula that
\begin{equation*}
\begin{split}
\int_M J(x,t) \phi_n^2(x) d\mu(x) 
&= -\int_M \frac{\partial}{\partial t} u^p(x,t) \phi_n^2(x) d\mu(x) 
-\int_M \Delta u^p(x,t)  \phi_n^2(x) d\mu(x)
\\ &=-\int_M \frac{\partial}{\partial t} u^p(x,t) \phi_n^2(x) d\mu(x) 
-\int_M \nabla u^p(x,t) \cdot \nabla \phi_n^2(x) d\mu(x).
\end{split}
\end{equation*}
Let us estimate the right hand side. The first integral converges to $\int_M \frac{\partial}{\partial t} u^p(x,t) d\mu(x)$, which is integrable. Indeed, we get from the H\"older inequality that
\begin{equation}\label{der}
\begin{split}
\int_M \left|\frac{\partial}{\partial t} u^p(x,t)\right| d\mu(x)
&=\int_M p u^{p-1}(x,t) |\Delta u(x,t)| d\mu(x)
\\ &\leq  
C \norm{u(\cdot,t)}_p^{p-1} \norm{\Delta u(\cdot,t)}_p.
\end{split}
\end{equation}
Now we move to the second integral. Note that
\[
\int_M \nabla u^p(x,t) \cdot \nabla \phi_n^2(x) d\mu(x) 
= \int_M 2pu^{p-1} \phi_n(x)\nabla u(x,t) \cdot \nabla \phi_n(x) d\mu(x).
\]
Still from the H\"older inequality, it holds
\begin{equation}\label{nabla}
\begin{split}
\abs{\int_M \nabla u^p(x,t) \cdot \nabla \phi_n^2(x) d\mu(x)}
&\leq 2p\norm{|\nabla \phi_n|}_{\infty} \int_M u^{p-1}(x,t)(\phi_n |\nabla u(x,t)|) d\mu(x)
\\ & \leq \frac{C}{n} \norm{\phi_n |\nabla u(\cdot,t)|}_{p} \norm{u}_p^{p-1}.
\end{split}
\end{equation}
By using the above two estimates (\ref{der}) and (\ref{nabla}), we get
\begin{equation}\label{J1}
\int_{M} J(x,t)\phi_n^2(x) d\mu(x) \leq
C \norm{u(\cdot,t)}_p^{p-1} \norm{\Delta u(\cdot,t)}_p+\frac{C}{n} \norm{\phi_n |\nabla u(\cdot,t)|}_{p} \norm{u}_p^{p-1}.
\end{equation}

Note also that from (\ref{eqj}), (\ref{J1}) and the H\"older inequality, 
\begin{equation*}
\begin{split}
&\norm{\phi_n|\nabla u(\cdot,t)|}_{p}^p
= C\int_M (u^{2-p}(x,t)J(x,t)\phi_n^2(x))^{p/2} d\mu
\\ &\leq C \norm{u}_p^{p(2-p)/2} \br{\int_{M} J(x,t)\phi_n^2(x)d\mu(x)}^{p/2}
\\ &\lesssim  
\norm{u}_p^{p(2-p)/2} \br{\norm{u(\cdot,t)}_p^{p-1} \norm{\Delta u(\cdot,t)}_p+\frac{1}{n} 
\norm{\phi_n |\nabla u(\cdot,t)|}_{p} \norm{u}_p^{p-1}}^{p/2}
\\ &\lesssim  
\norm{u}_p^{p(2-p)/2} \br{\norm{u(\cdot,t)}_p^{p(p-1)/2} \norm{\Delta u(\cdot,t)}_p^{p/2}+\frac{1}{n^{p/2}} 
\norm{\phi_n |\nabla u(\cdot,t)|}_{p}^{p/2} \norm{u}_p^{p(p-1)/2}}
\\ &\lesssim 
\norm{u(\cdot,t)}_p^{p/2} \br{\norm{\Delta u(\cdot,t)}_p^{p/2}+\frac{1}{n^{p/2}} \norm{\phi_n |\nabla u(\cdot,t)|}_{p}^{p/2}}.
\end{split}
\end{equation*}
Therefore, we have
\begin{eqnarray*}
\norm{\phi_n |\nabla u(\cdot,t)|}_{p}^p
\lesssim \frac{1}{n^p}\norm{u(\cdot,t)}_p^{p} +\norm{u(\cdot,t)}_p^{p/2}\norm{\Delta u(\cdot,t)}_p^{p/2}.
\end{eqnarray*}
As $n$ goes to infinity, the left hand side converges to $\int_M |\nabla u|^p d\mu$ from Lebesgue's monotone convergence theorem. Finally we obtain
\begin{equation}\label{hd}
\norm{|\nabla u(\cdot,t)|}_{p}^p
\lesssim \norm{u(\cdot,t)}_p^{p/2}\norm{\Delta u(\cdot,t)}_p^{p/2}.
\end{equation}

On the one hand, as $t$ goes to zero, we get the multiplicative inequality from (\ref{hd}) that 
\[
\norm{\abs{\nabla f}}_p^p \leq C \Vert f\Vert_p^{p/2} \Vert \Delta f \Vert_p^{p/2}.
\]
 
On the other hand, by the analyticity of the heat semigroup, (\ref{hd}) yields
\[
\norm{\abs{\nabla u(\cdot, t)}}_p^p \leq C t^{-p/2} \Vert f \Vert_p^{p},
\]
which is exactly (\ref{Gp}).
\end{proof}

\begin{rem}
Note that Proposition \ref{GP} can not be extended to the case $p>2$ without additional assumptions. Indeed (\ref{Gp}) for $p>2$ 
has consequences that are not always true, see \cite{ACDH04}.
\end{rem}

Combining Proposition \ref{eq} and Proposition \ref{GP}, we get
\begin{cor}
Let $M$ be a complete Riemannian manifold. Then for any fixed $\alpha \in (0,1/2)$, the operator 
$\nabla e^{-\Delta}\Delta^{-\alpha}$ is $L^p$ bounded for $1<p\leq 2$.
\end{cor}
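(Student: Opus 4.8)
The plan is to combine the two propositions just established. The final statement to prove is the Corollary: on every complete Riemannian manifold $M$, for any fixed $\alpha\in(0,1/2)$, the operator $\nabla e^{-\Delta}\Delta^{-\alpha}$ is $L^p$ bounded for $1<p\leq 2$. The strategy is entirely a matter of chaining the preceding results: Proposition \ref{GP} furnishes (\ref{Gp}) for the range $1<p\leq 2$ on an arbitrary complete manifold, with no hypotheses; Proposition \ref{eq} then upgrades any instance of (\ref{Gp}) (for a given $p\in(1,\infty)$) into (\ref{Rp-inftya}), which is exactly the assertion that $\nabla e^{-\Delta}\Delta^{-\alpha}$ is $L^p$ bounded. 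So the proof is essentially one sentence.

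Concretely, I would proceed as follows. First, fix $p\in(1,2]$ and $\alpha\in(0,1/2)$. Apply Proposition \ref{GP} to get $\norm{\abs{\nabla e^{-t\Delta}}}_{p\to p}\leq C_p t^{-1/2}$ for all $t>0$, i.e. (\ref{Gp}) holds. Second, feed this into Proposition \ref{eq}: since $M$ satisfies (\ref{Gp}) for this $p$ and $\alpha\in(0,1/2)$, the conclusion is that (\ref{Rp-inftya}) holds, that is,
\[
\norm{\abs{\nabla e^{-\Delta} f}}_p \leq C\norm{\Delta^{\alpha} f}_p,\qquad \forall f\in\mathcal C_0^\infty(M),
\]
which after the substitution $f\mapsto \Delta^{-\alpha}f$ (valid on $\mathcal C_0^\infty$ via spectral theory, as already used in the text) says precisely that $\nabla e^{-\Delta}\Delta^{-\alpha}$ is bounded on $L^p$. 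Since $p\in(1,2]$ was arbitrary, the Corollary follows.

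There is really no obstacle here: the corollary is a formal consequence of two results proved immediately above, so the only thing to be careful about is bookkeeping. If one wanted to present a self-contained argument rather than merely citing Propositions \ref{GP} and \ref{eq}, the one genuinely substantive ingredient — and hence the ``hard part'' — is the Stein-type computation inside Proposition \ref{GP}: running the subharmonicity estimate for $u^p$ with $u=e^{-t\Delta}f$, using that $(\partial_t+\Delta)u^p = -p(p-1)u^{p-2}|\nabla u|^2\leq 0$ (which crucially needs $1<p\leq 2$ so that $u^{p-2}$ is well-defined and the sign is right), combining with $\int_M\Delta u^p\,d\mu=0$ and two applications of Hölder to land (\ref{mg}), and then letting $t\to0$ for (\ref{MI}) and invoking analyticity of the semigroup for (\ref{Gp}). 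The convergence of the integral $\int_0^\infty (t+1)^{-1/2}t^{-(1-\alpha)}\,dt$ in Proposition \ref{eq}, finite exactly when $\alpha\in(0,1/2)$ (integrability at $0$ needs $\alpha>0$, at $\infty$ needs $1/2-\alpha+1-\alpha>1$, i.e. $\alpha<1/2$), is then the only remaining elementary check. Everything beyond this is the one-line assembly described above.
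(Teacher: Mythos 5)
Your proof is correct and is exactly the paper's argument: the corollary is stated in the text as an immediate consequence of combining Proposition \ref{GP} (which gives (\ref{Gp}) for $1<p\leq 2$ on any complete manifold) with Proposition \ref{eq} (which converts (\ref{Gp}) into (\ref{Rp-inftya})). Your additional remarks on the convergence of $\int_0^\infty (t+1)^{-1/2}t^{-(1-\alpha)}\,dt$ and on where $1<p\leq 2$ is used are accurate.
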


\bigskip

\section{Sub-Gaussian heat kernel estimates and quasi Riesz transforms}

Remember that with local assumptions on the manifold, we get the $L^p$ ($1<p\leq 2$) boundedness of quasi Riesz transforms 
$\nabla(I+\Delta)^{-1/2}+\nabla e^{-\Delta }\Delta^{-\alpha}$, where $0<\alpha<1/2$. If we assume in addition a global Gaussian 
heat kernel upper bound, the Riesz transform itself is $L^p$ bounded for $1<p\leq 2$ and weak $(1,1)$ bounded. What  
happens if we suppose globally another heat kernel upper bound, the so-called sub-Gaussian upper bound  (\ref{ue})?

In the case of Riemannian manifolds satisfying (\ref{doubling}) and (\ref{ue}), Proposition \ref{exo} tells us that the quasi Riesz 
transform is $L^p$ bounded for $1<p\leq 2$. Yet we don't know whether the Riesz transform, which corresponds to $\alpha=1/2$, 
is $L^p$ bounded or not for $1<p\leq 2$. Instead, we will study the endpoint case for the quasi Riesz transform, that is, what 
happens for $p=1$?  In the following, we will prove the weak $(1,1)$ boundedness of the quasi Riesz transform. 

\bigskip

\subsection{More about sub-Gaussian heat kernel estimate}
One can rewrite (\ref{ue}) as follows:
\[
p_{t}(x,y) \leq\left\{ \begin{aligned}
& \frac{C}{V(x,t^{1/2})}\exp\br{-c \frac{d^2(x,y)}{t}}, &t<\min\{1,d(x,y)\},\\
& \frac{C}{V(x,t^{1/2})}\exp\br{-c \br{\frac{d^m(x,y)}{t}}^{1/(m-1)}}, &d(x,y)\leq t<1,\\
& \frac{C}{V(x,t^{1/m})}\exp\br{-c \frac{d^2(x,y)}{t}}, &1\leq t<d(x,y),\\
& \frac{C}{V(x,t^{1/m})}\exp\br{-c\br{\frac{d^m(x,y)}{t}}^{1/(m-1)}}, &t\geq \max \{1, d(x,y)\}.
\end{aligned}\right.
\]

Note that for $d(x,y)\leq t$, one has $\frac{d^2(x,y)}{t} \leq \br{\frac{d^m(x,y)}{t}}^{1/(m-1)}$. And for $t \leq d(x,y)$, one has 
$\frac{d^2(x,y)}{t} \geq \br{\frac{d^m(x,y)}{t}}^{1/(m-1)}$. Thus we have the following estimate:
\begin{equation}\label{ls}
p_{t}(x,y) \leq\left\{ \begin{aligned}
& \frac{C}{V(x,t^{1/2})}\exp\br{-c \frac{d^2(x,y)}{t}}, &0<t<1,\\
& \frac{C}{V(x,t^{1/m})}\exp\br{-c \br{\frac{d^m(x,y)}{t}}^{1/(m-1)}}, &t\geq 1.
\end{aligned}\right.
\end{equation}

That is, the small time behaviour of the heat kernel is Gaussian as in Euclidean spaces while the heat kernel has a sub-Gaussian 
decay for large time. 

There exist such manifolds for all $m \geq 2$. One can choose any $D\geq 1$ and any $2\leq m\leq D+1$ such that 
$V(x,r) \simeq r^D$ for $r\geq 1$ and (\ref{ue}) holds. Indeed, fractal manifolds, which are built from graphs with a fractal  
structure at infinity, provide examples satisfying (\ref{ue}) with some $m>2$ (in fact, two-sided sub-Gaussian heat kernel estimates). 
We refer to \cite{Ba04} for the construction of suitable graphs. For a concrete example, Barlow, Coulhon and Grigor'yan in 
\cite{BCG01} constructed such a manifold whose discretisation is the Vicsek graph. For more examples, see the work of Barlow 
and Bass \cite{BB92}, \cite{BB99a}, \cite{BB99b}. We also refer to \cite{GT12,HSC01} for more general non-classical heat kernel 
estimates on metric measure spaces.
\medskip

\emph{Comparison with the Gaussian heat kernel estimate (\ref{UE}):}
\[
p_t(x,y)\leq \frac{C}{V(x,\sqrt t)} \exp\left(-c\frac{d^2(x,y)}{t}\right),\,\,\forall x,y\in M, t>0.
\]
Since $m>2$, $p_t(x,x)$ decays with $t$ more slowly in the sub-Gaussian case than in the Gaussian case. Also for 
$t\geq \max\{1,d(x,y)\}$, $p_t(x,y)$ decays with $d(x,y)$ faster in the sub-Gaussian case than in the Gaussian case. Therefore 
the two kinds of pointwise estimates are not comparable. 

\bigskip

\subsection{Weighted estimates of the heat kernel}
Let $(M,d,\mu)$ be a non-compact complete manifold satisfying the doubling volume property (\ref{doubling}) and the 
sub-Gaussian estimate (\ref{ue}). In the following, we aim to get the integral estimates for the heat kernel and its time and space
derivatives. The method we use here is similar as in \cite[Section 2.3]{CD99}.  

First, we have the pointwise estimate of the time derivative of heat kernel:
\begin{lem}
\label{lem1} Let $M$ be as above, then we have
\begin{equation}\label{tdhk}
\left|\frac{\partial}{\partial t}p_t(x,y)\right|
\leq\left\{ 
\begin{aligned}
&\frac{C}{t V\left(y,t^{1/2}\right)}\exp\left(-c\frac{d^2(x,y)}{t}\right), &t< 1,\\
& \frac{C}{t V\left(y,t^{1/m}\right)}\exp\left(-c\left(\frac{d^m(x,y)}{t}\right)^{1/(m-1)}\right), &t\geq 1.\end{aligned} \right.
\end{equation}
\end{lem}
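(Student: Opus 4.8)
The plan is to derive the pointwise bound on $\partial_t p_t(x,y)$ from the pointwise bound \eqref{ls} on $p_t$ itself, using the analyticity of the heat semigroup on $L^2(M)$ together with the Cauchy integral formula, exactly in the spirit of \cite[Section 2.3]{CD99}. The key observation is that $\frac{\partial}{\partial t} p_t(x,y) = -\Delta_x p_t(x,y)$, and that $\Delta e^{-t\Delta}$ can be recovered by differentiating $e^{-z\Delta}$ in the complex parameter $z$; the kernel $p_z(x,y)$ extends holomorphically to a sector $|\arg z| < \pi/2$, with an estimate of the form $|p_z(x,y)| \leq C p_{\mathrm{Re}\, z}'(x,y)$ where the right side is the same type of sub-Gaussian expression (with slightly worse constants $c, C$) obtained by the standard analytic-continuation argument for heat kernels satisfying an on-diagonal or Gaussian-type bound.

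First I would fix $t>0$ and write, via Cauchy's formula on a circle $\gamma$ of radius $t/2$ centered at $t$,
\[
\frac{\partial}{\partial t} p_t(x,y) = \frac{1}{2\pi i} \int_{\gamma} \frac{p_z(x,y)}{(z-t)^2}\, dz,
\]
so that $\left|\frac{\partial}{\partial t} p_t(x,y)\right| \leq \frac{2}{t} \sup_{z \in \gamma} |p_z(x,y)|$. It then remains to bound $|p_z(x,y)|$ for $z$ on this circle. For such $z$ we have $\mathrm{Re}\, z \simeq t$ and $|z| \simeq t$, so $V(y, |z|^{1/2}) \simeq V(y, t^{1/2})$ for $t<1$ and $V(y,|z|^{1/m}) \simeq V(y, t^{1/m})$ for $t \geq 1$ by the doubling property \eqref{D1}; the exponential factor $\exp(-c\, G(d(x,y), \mathrm{Re}\, z))$ survives analytic continuation with a reduced constant $c$, since the real part of the complex-time Gaussian/sub-Gaussian exponent controls the modulus. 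Combining these and absorbing the factor $t$ from the Cauchy estimate into the prefactor $1/(tV(\cdot))$ gives precisely \eqref{tdhk}, splitting at $t=1$ according to \eqref{ls}.

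The main obstacle is establishing the complex-time bound $|p_z(x,y)| \lesssim \frac{C}{V(y, (\mathrm{Re}\, z)^{1/\kappa})}\exp(-c\, G(d(x,y), \mathrm{Re}\, z))$ (with $\kappa = 2$ or $m$) with the \emph{sub-Gaussian} profile rather than merely the Gaussian one, since the passage from real to complex time is classically packaged for Gaussian estimates (e.g. via Davies' method or Phragmén–Lindelöf) but requires care for the $t \geq 1$ regime where the exponent $G$ is $(d^m/t)^{1/(m-1)}$. The route I would take is to avoid a direct complex-time heat kernel estimate altogether and instead work at the level of $L^2$ operator norms with exponential weights: one shows $\|e^{-r^2\Delta}\|_{L^2(E)\to L^2(F)}$ and the analogous sub-Gaussian-scale estimate decay like $\exp(-cG(d(E,F), r))$ for disjoint sets $E, F$ (this is the $L^1$–$L^2$ off-diagonal statement the paper announces it will prove next, and it follows from \eqref{ls} by integration), then uses analyticity to transfer these to $\|\Delta e^{-t\Delta}\|_{L^2(E)\to L^2(F)}$, and finally converts back to a pointwise kernel bound by the local doubling/Sobolev-type argument. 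Since the lemma as stated is pointwise, in the write-up I would most likely follow \cite{CD99} directly: cite their complex-time argument, note that it uses only \eqref{ls} as input and that all constants depend only on those in \eqref{doubling} and \eqref{ue}, and check that the two time regimes $t<1$ and $t\geq 1$ are handled separately but identically in structure.
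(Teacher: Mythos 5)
Your overall strategy---analyticity of $z\mapsto p_z(x,y)$ plus Cauchy's formula on the circle $|z-t|=t/2$---is the right family of ideas, and you have correctly located the crux: everything hinges on a complex-time bound with the \emph{sub-Gaussian} profile. But the proposal does not close that gap. The assertion that the factor $\exp\left(-c\,G(d(x,y),\mathrm{Re}\,z)\right)$ ``survives analytic continuation with a reduced constant'' is exactly the statement that needs proof: off the diagonal, the only estimate spectral theory gives for free is $|p_z(x,y)|\leq \left(p_{\mathrm{Re}\,z}(x,x)\,p_{\mathrm{Re}\,z}(y,y)\right)^{1/2}$, which loses the exponential decay entirely, and the Phragm\'en--Lindel\"of/perturbation arguments that restore it are tailored to the exponent $d^2/t$, not to $(d^m/t)^{1/(m-1)}$. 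For the same reason your fallback of ``citing the complex-time argument of \cite{CD99}'' does not apply as stated---that argument is Gaussian-specific. Your alternative route through weighted $L^2$ off-diagonal bounds, operator-level analyticity, and reassembly into a pointwise bound is viable in principle but is a substantial composition/summation argument (which degrades and must then recover the exponent), not the one-line conversion suggested.

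The paper avoids complex time altogether by invoking Davies \cite{Da97} (Thm.\ 4 and Cor.\ 5): if $p_s(x,y)\leq a\,e^{-b}$ for all \emph{real} $s$ in an interval $[(1-\delta)t,(1+\delta)t]$, then $|\partial_t p_t(x,y)|\leq C t^{-1}a\,e^{-b'}$ with $b'$ a fixed fraction of $b$; since the hypothesis involves only real times and an arbitrary profile $a\,e^{-b}$, the bound \eqref{ls} can be fed in directly. The only remaining issue is the crossover regime $t\simeq 1$, where the interval (or, in your version, the circle) straddles $s=1$ and the Gaussian and sub-Gaussian profiles of \eqref{ls} must be shown mutually comparable; this holds because $V(y,s^{1/2})\simeq V(y,s^{1/m})$ for $s\simeq 1$ and $\frac{d^2(x,y)}{s}$ and $\left(\frac{d^m(x,y)}{s}\right)^{1/(m-1)}$ compare one way for $s\leq d(x,y)$ and the other way for $s\geq d(x,y)$. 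Your write-up omits this matching step. So: either cite \cite{Da97} as the paper does, or accept that a self-contained proof requires establishing the sub-Gaussian complex-time bound, which is a genuine piece of work rather than a remark.
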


\begin{proof}
We see from \cite{Da97} (Thm. 4 and Cor. 5) that there exist an $a\in(0,1)$ such that for $0<t<a$
\[
\left|\frac{\partial}{\partial t}p_t(x,y)\right|\leq
\frac{C}{t V\left(y,t^{1/2}\right)}\exp\left(-c\frac{d^2(x,y)}{t}\right);
\]
and for $t>a^{-1}$,
\[
\left|\frac{\partial}{\partial t}p_t(x,y)\right|\leq
\frac{C}{t V\left(y,t^{1/m}\right)}\exp\left(-c\left(\frac{d^m(x,y)}{t}\right)^{1/(m-1)}\right).
\]

For $t\in (a,1)$, according to \cite{Da97}, Cor.5, it suffices to show that there exists a constant 
$\delta \in (0,1)$ such that for all $s\in [(1-\delta )t,(1+\delta )t]$
\[
p_s(x,y)\leq \frac{C}{V\left(y,s^{1/2}\right)}
\exp\left(-c\frac{d^2(x,y)}{s}\right).
\]
This is obvious since $V(x,s^{1/2})\simeq V(x,s^{1/m})$ and $\frac{d^2(x,y)}{t} \leq \br{\frac{d(x,y)}{t}}^{m/(m-1)}$ for $t\geq d(x,y)$.

The case for $t\in [1,a^{-1})$ is similar. Due to the facts $V(x,s^{1/2})\simeq V(x,s^{1/m})$ and 
$\frac{d^2(x,y)}{t} \geq \br{\frac{d(x,y)}{t}}^{m/(m-1)}$ for $t\leq d(x,y)$, there exists a constant $\delta \in (0,1)$ such that for all 
$s\in [(1-\delta )t,(1+\delta )t]$,
\[
p_s(x,y)\leq \frac{C}{V\left(y,s^{1/m}\right)}\exp\left(-c\left(\frac{d^m(x,y)}{s}\right)^{1/(m-1)}\right).
\]

Therefore, we obtain (\ref{tdhk}).
\end{proof}

Now we intend to  estimate $\int_{B(x,r)^{c}}|\nabla p_t(x,y)|d\mu(x)$ for any $t>0$ and $r\geq 0$.

\begin{lem}
\label{lem2} For any $\alpha\in (1/m,1/2)$, we have for any $y\in M$ and $r\geq 0$,
\begin{equation}\label{we}
\int_{M\setminus B(y,r)}|\nabla p_t(x,y)|d\mu(x)\leq\left\{
\begin{aligned}
&C t^{-\frac{1}{2}}e^{-c\frac{r^2}{t}},\,\,\,& 0<t<1, \\
&C t^{-\alpha}e^{-c\left(\frac{r^m}{t}\right)^{\frac{1}{m-1}}},\,\,\,& t\geq 1.
\end{aligned}\right.
\end{equation}
\end{lem}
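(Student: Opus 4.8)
The plan is to control the gradient of the heat kernel by combining the pointwise bound on the time derivative from Lemma~\ref{lem1} with an integrated (Caccioppoli-type) estimate, following the strategy of \cite[Section~2.3]{CD99}. First I would recall the basic identity relating the space gradient to the time derivative: for any ball $B=B(y,r)$ and a suitable cut-off,
\[
\int_{B(y,2s)}|\nabla p_t(\cdot,y)|^2\,d\mu \lesssim \frac{1}{t}\int_{B(y,3s)}p_{t/2}(\cdot,y)^2\,d\mu + \int_{B(y,3s)}\Big|\frac{\partial}{\partial t}p_t(\cdot,y)\Big|\,p_t(\cdot,y)\,d\mu,
\]
obtained by integrating $\Delta p_t = -\partial_t p_t$ against $\eta^2 p_t$ with $\eta$ a Lipschitz cut-off, $\eta\equiv1$ on $B(y,2s)$, supported in $B(y,3s)$, $|\nabla\eta|\lesssim1/s$. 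This reduces everything to $L^2$ bounds on $p_t(\cdot,y)$ and on $\partial_t p_t(\cdot,y)$ over annuli, which are supplied by \eqref{ls} and Lemma~\ref{lem1} together with \eqref{doubling} / \eqref{D1}.

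Next I would pass from the local $L^2$ estimate to the $L^1$ estimate over $M\setminus B(y,r)$ by the standard annular decomposition. Writing $M\setminus B(y,r) = \bigcup_{j\ge1} C_j$ with $C_j$ an annulus of inner radius $\simeq 2^j r$ (or, when $r=0$, $C_j = B(y,2^{j+1})\setminus B(y,2^j)$ plus the unit ball), Cauchy--Schwarz gives
\[
\int_{C_j}|\nabla p_t(\cdot,y)|\,d\mu \leq V\big(y,2^{j+1}r\big)^{1/2}\Big(\int_{C_j}|\nabla p_t(\cdot,y)|^2\,d\mu\Big)^{1/2}.
\]
On $C_j$ one uses the $L^2$ bound just derived with $s\simeq 2^j r$, where the heat kernel factors carry the exponential gain $\exp(-c\,d(\cdot,y)^2/t)$ for $t<1$ and $\exp\big(-c(d(\cdot,y)^m/t)^{1/(m-1)}\big)$ for $t\ge1$, evaluated at distance $\simeq 2^j r$. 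The doubling property turns the volume factor $V(y,2^{j+1}r)^{1/2}$ over $V(y,t^{1/2})$ (resp.\ $V(y,t^{1/m})$) into at most a polynomial $ (2^j r / t^{1/2})^{\nu/2}$ (resp.\ $(2^j r/t^{1/m})^{\nu/2}$), which is absorbed by the exponential; summing the resulting geometric-type series in $j$ yields the factor $e^{-cr^2/t}$ for $t<1$ and $e^{-c(r^m/t)^{1/(m-1)}}$ for $t\ge1$, with a prefactor $t^{-1/2}$ in the first case coming from the $1/t$ in the Caccioppoli bound after taking the square root, and $t^{-\alpha}$ in the second case.

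The main obstacle I anticipate is the large-time regime $t\ge1$: there the natural exponent coming out of the $1/\sqrt t$ in the Caccioppoli estimate would be $t^{-1/2}$, but the claimed bound is only $t^{-\alpha}$ with $\alpha<1/2$, so in fact one has \emph{room to spare} in the decay rate and must instead spend part of the exponential $\exp\big(-c(r^m/t)^{1/(m-1)}\big)$, or part of the volume-ratio power, to convert the genuine $t^{-1/2}$ (together with the sub-Gaussian tail) into the weaker but uniform-in-$r$ statement with $t^{-\alpha}$ --- the constraint $\alpha>1/m$ is exactly what is needed for the volume-growth power $(2^j r/t^{1/m})^{\nu/2}$ to be controlled after one trades $t^{1/m-1/2}\le1$ against it. Concretely, one writes $t^{-1/2} = t^{-\alpha}\cdot t^{\alpha-1/2}$ and checks, splitting into $r\le t^{1/m}$ and $r> t^{1/m}$, that $t^{\alpha-1/2}$ times the remaining volume factor and exponential stays bounded precisely when $\alpha\in(1/m,1/2)$. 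The small-time case $t<1$ is more routine: there $\rho^{-1}(t)=t^{1/2}$, the estimate \eqref{ls} is genuinely Gaussian, and the argument is identical to the one in \cite{CD99}, producing $Ct^{-1/2}e^{-cr^2/t}$ directly.
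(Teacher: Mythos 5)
Your small-time case and the overall skeleton (annular decomposition plus Cauchy--Schwarz against $V^{1/2}(y,2^{j+1}r)$) match the paper, but the large-time case contains a genuine gap, and it sits exactly at the point you describe as having ``room to spare''. First, a technical slip: integrating $\Delta p_t=-\partial_t p_t$ against $\eta^2 p_t$ produces the term $\int|\nabla\eta|^2p_t^2\simeq s^{-2}\int_{B(y,3s)}p_t^2$, not $t^{-1}\int p_{t/2}^2$. More importantly, any such localized $L^2$ gradient estimate --- whether via a cut-off on an annulus at distance $s\simeq 2^jr$, or, as in the paper, via a global weighted identity where the term $I_2$ coming from $\nabla_x\exp\bigl(\gamma(d^m(x,y)/t)^{1/(m-1)}\bigr)$ contributes a factor $t^{-1/m}$ --- is limited by the scale of the cut-off/weight. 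For the innermost annuli, where $s\simeq t^{1/m}$ and the exponential is of order $1$, the cut-off term is of size $t^{-2/m}\Vert p_t(\cdot,y)\Vert_2^2$, which for $m>2$ and $t\geq1$ dominates $t^{-1}\Vert p_t(\cdot,y)\Vert_2^2$. So the localized estimate yields only the prefactor $t^{-1/m}$, not a ``genuine $t^{-1/2}$ together with the sub-Gaussian tail''. Indeed, if \eqref{we} held with exponent $1/2$ in place of $\alpha$, the Remark following the Lemma notes that the argument of Section 3.3 would bound the full Riesz transform, which is precisely what is not known here; so there is no room to spare. Your reading of the constraint $\alpha>1/m$ as a volume-growth condition is a symptom of the same confusion: the doubling exponent $\nu$ is always absorbed by the exponential and imposes no restriction on $\alpha$.

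The missing idea is an interpolation between two competing $L^2$ bounds for $\nabla p_t(\cdot,y)$ on the annulus $C_i$: the weighted one, $t^{-1/m}V^{-1/2}(y,t^{1/m})\exp\bigl(-c(2^{im}r^m/t)^{1/(m-1)}\bigr)$ (estimate \eqref{sq1}, proved via the weighted integration by parts and Lemma \ref{lem1}), and the unweighted one, $t^{-1/2}V^{-1/2}(y,t^{1/m})$ carrying no spatial decay at all (estimate \eqref{sq2}, from $\norm{\abs{\nabla e^{-t\Delta/2}}}_{2\to2}\leq Ct^{-1/2}$ applied to $p_{t/2}(\cdot,y)$). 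Raising these to the powers $\theta=\frac{1/2-\alpha}{1/2-1/m}$ and $1-\theta$ produces the prefactor $t^{-\alpha}$ at the price of multiplying the constant in the exponential by $\theta$; this is where the hypothesis $\alpha\in(1/m,1/2)$ enters, and why the constant $c$ in \eqref{we} degenerates as $\alpha\to1/2$. Without this step your argument, carried out correctly, gives \eqref{we} only with $\alpha=1/m$ (with full exponential) or with $t^{-1/2}$ but no exponential factor, and neither suffices.
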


\begin{rem}
Note that the estimate (\ref{we}) holds for any $0<\alpha<1/2$. But in the proof below, $\alpha$ can not achieve $1/2$ unless 
$m=2$. This allows us to obtain the weak $(1,1)$ boundedness of $\nabla e^{-\Delta} \Delta^{-\alpha}$, not the Riesz transform. 
If one could get (\ref{we}) with $\alpha=1/2$, the proof in Section 4 below would yield the boundedness of the Riesz transform.
\end{rem}

\begin{proof}
For $0<t<1$, the above estimate is proved in \cite{CD99}.

\medskip

Now for $t\geq 1$. Comparing with the proof of the Gaussian case in \cite{CD99}, we need to replace the weight 
$\exp\left(-c\frac{d^2(x,y)}{t}\right)$ by $\exp\left(-c\left(\frac{d^m(x,y)}{t}\right)^{1/(m-1)}\right)$ ($c$ is chosen 
appropriately).

\medskip

\noindent \underline{Step 1}: For any $c>0$,
\begin{equation}
\label{est}
\int_{M\setminus B(y,r)}\exp\left(-c\left(\frac{d^m(x,y)}{t}\right)^{1/(m-1)}\right)d\mu(x)
\lesssim e^{-\frac{c}{2}\left(\frac{r^m}{t}\right)^{1/(m-1)}}V\left(y,t^{1/m}\right).
\end{equation}
Indeed,
\begin{eqnarray*}
&&\int_{M\setminus B(y,r)}\exp\left(-c\left(\frac{d^m(x,y)}{t}\right)^{1/(m-1)}\right)d\mu(x)
\\ & \leq &
e^{-\frac{c}{2}\left(\frac{r^m}{t}\right)^{1/(m-1)}} \int_M \exp\left(-\frac{c}{2}\left(\frac{d^m(x,y)}{t}\right)^{1/(m-1)}\right)d\mu(x)
\\& \leq &
e^{-\frac{c}{2}\left(\frac{r^m}{t}\right)^{1/(m-1)}} \sum_{i=0}^{\infty}\int_{B(y,(i+1)t^{1/m})\setminus B(y,it^{1/m})}
\exp\left(-\frac{c}{2}\left(\frac{d^m(x,y)}{t}\right)^{1/(m-1)}\right)d\mu(x)
\\& \leq &
e^{-\frac{c}{2}\left(\frac{r^m}{t}\right)^{1/(m-1)}}V\left(y,t^{1/m}\right) \sum_{i=0}^{\infty}(i+1)^\nu e^{-\frac{c}{2}i^{1/(m-1)}}
\\&\leq &
C e^{-\frac{c}{2}\left(\frac{r^m}{t}\right)^{1/(m-1)}}V\left(y,t^{1/m}\right).
\end{eqnarray*}

\noindent \underline{Step 2}: For $0<\gamma<2c $ ($c$ is the constant in (\ref{ue})), we have
\begin{eqnarray*}
&& \int_M p_t(x,y)^2\exp\left(\gamma\left(\frac{d^m(x,y)}{t}\right)^{1/(m-1)}\right)d\mu(x)
\\ &\leq& 
\frac{C}{ V^2\left(y,t^{1/m}\right)} \int_M \exp\left((\gamma-2c)\left(\frac{d^m(x,y)}{t}\right)^{1/(m-1)}\right)d\mu(x)
\leq \frac{C_\gamma}{ V\left(y,t^{1/m}\right)}.
\end{eqnarray*}
This is a consequence of (\ref{ue}) and Step 1 with $r=0$.

\medskip

\noindent \underline{Step 3}: Denote 
\[
I(t,y)=\int_M |\nabla_x p_t(x,y)|^2
\exp\left(\gamma\left(\frac{d^m(x,y)}{t}\right)^{1/(m-1)}\right)d\mu(x),
\]
with $\gamma$ small enough. Using integration by parts,
\begin{eqnarray*}
I(t,y) & = & \int_M p_t(x,y)\Delta p_t(x,y)\exp\left(\gamma\left(\frac{d^m(x,y)}{t}\right)^{1/(m-1)}\right)d\mu(x)
\\&&
-\int_M p_t(x,y)\nabla_x p_t(x,y)\cdot\nabla_x\exp\left(\gamma\left(\frac{d^m(x,y)}{t}\right)^{1/(m-1)}\right)d\mu(x)
\\& = &
-\int_M p_t(x,y)\frac{\partial}{\partial t} p_t(x,y)\exp\left(\gamma\left(\frac{d^m(x,y)}{t}\right)^{1/(m-1)}\right)d\mu(x)
\\&&
-\frac{\gamma m}{m-1}\int_M p_t(x,y)\nabla_x p_t(x,y)\left(\frac{d(x,y)}{t}\right)^{1/(m-1)}
\\&&
\cdot\nabla_x d(x,y)\exp\left(\gamma\left(\frac{d^m(x,y)}{t}\right)^{1/(m-1)}\right)d\mu(x)
\\&=& I_1(t,y)+I_2(t,y).
\end{eqnarray*}

According to Lemma \ref{lem1} and Step 1,
\[
|I_1(t,y)|\leq \frac{ C_\gamma'}{t V\left(y,t^{1/m}\right)}.
\]

For $I_2$, since $|\nabla_x d(x,y)|\leq 1$ and $\left(\frac{d(x,y)}{t}\right)^{1/(m-1)}=\left(\frac{d^m(x,y)}{t}\right)^{1/m(m-1)}t^{-1/m}$,
then from Step 2 and Cauchy-Schwartz inequality,
\[
|I_2(t,y)| \leq C_{\gamma}'' t^{-1/m}(I(t,y))^{1/2}\left(\frac{C_\gamma}{V\left(y,t^{1/m}\right)}\right)^{1/2}.
\]

We get
\begin{eqnarray*}
I(t,y) &\leq & 
\frac{C_\gamma'}{t V\left(y,t^{1/m}\right)}+t^{-1/m}(I(t,y))^{\frac{1}{2}}\left(\frac{C_\gamma''}{V\left(y,t^{1/m}\right)}\right)^{1/2}
\\&\leq &
\frac{C_\gamma'}{t^{2/m}V\left(y,t^{1/m}\right)}+(I(t,y))^{1/2}\left(\frac{C_\gamma''}{t^{2/m}V\left(y,t^{1/m}\right)}\right)^{1/2}.
\end{eqnarray*}

Therefore
\begin{equation}\label{wd}
I(t,y)\leq \frac{C}{t^{2/m} V\left(y,t^{1/m}\right)}.
\end{equation}

\noindent \underline{Step 4}: We divide the integral $\int_{M\setminus B(y,r)} \abs{\nabla p_t(x,y)} d\mu(x)$ as follows
\begin{eqnarray*}
\int_{M\setminus B(y,r)} \abs{\nabla p_t(x,y)} d\mu(x)
&=& 
\sum_{i=0}^{\infty} \int_{2^i r< d(x,y)\leq 2^{i+1}r} \abs{\nabla p_t(x,y)} d\mu(x)
\\ &\leq& 
\sum_{i=0}^{\infty} V^{1/2}\br{y, 2^{i+1}r} \br{\int_{2^i r< d(x,y)\leq 2^{i+1}r} \abs{\nabla p_t(x,y)}^2 d\mu(x)}^{1/2}.
\end{eqnarray*}

For each $i \geq 0$, it follows from (\ref{wd}) that
\begin{equation}\label{sq1}
\begin{split}
&\br{\int_{2^i r< d(x,y)\leq 2^{i+1}r} \abs{\nabla p_t(x,y)}^2 d\mu(x)}^{1/2}
\\ \leq &
\br{\int_{2^i r< d(x,y)\leq 2^{i+1}r} \abs{\nabla p_t(x,y)}^2 \exp\br{c \br{\frac{d^m(x,y)}{t}}^{1/(m-1)}} d\mu(x)}^{1/2}
\cdot e^{-c \br{\frac{2^{im}r^m}{t}}^{1/(m-1)}}
\\ \leq &
\frac{C}{t^{1/m}V^{1/2}(y,t^{1/m})} e^{-c \br{\frac{2^{im}r^m}{t}}^{1/(m-1)}}.
\end{split}
\end{equation}

On the other hand, applying (\ref{est}) with $r=0$ (as well as the corresponding estimate for $t/2<1$),
\begin{equation}\label{sq2}
\begin{split}
\br{\int_{2^i r< d(x,y)\leq 2^{i+1}r} \abs{\nabla p_t(x,y)}^2 d\mu(x)}^{1/2}
 \leq &
\norm{\abs{\nabla e^{-\frac{t}{2}\Delta}}}_{2\rightarrow 2} \norm{p_{\frac{t}{2}}(\cdot,y)}_{2}
\\ \leq& 
C t^{-1/2}  p_t^{1/2}(y,y)
\\ \leq &
\frac{C}{t^{1/2} V^{1/2}\br{y, t^{1/m}}}.
\end{split}
\end{equation}
The second inequality follows from the fact $\norm{p_{\frac{t}{2}}(\cdot,y)}_{2}^2=p_t(y,y)$.

Thus taking $\theta=\frac{\frac{1}{2}-\alpha}{\frac{1}{2}-\frac{1}{m}}$, we get from (\ref{sq1}) and (\ref{sq2}) that
\begin{equation}\label{l2}
\begin{split}
&\br{\int_{2^i r< d(x,y)\leq 2^{i+1}r} \abs{\nabla p_t(x,y)}^2 d\mu(x)}^{1/2}
\\ \leq& 
\br{\frac{C}{t^{1/m}V^{1/2}(y,t^{1/m})} e^{-c \br{\frac{2^{im}r^m}{t}}^{1/(m-1)}}}^{\theta} 
\br{\frac{C}{t^{1/2} V^{1/2}\br{y, t^{1/m}}}}^{1-\theta}
\\ \leq& 
\frac{C}{t^{\alpha} V^{1/2}\br{y, t^{1/m}}} e^{-c \br{\frac{2^{im}r^m}{t}}^{1/(m-1)}},
\end{split}
\end{equation}
where $c$ depends on $\alpha$.

Finally (\ref{D1}) and (\ref{l2}) yield
\begin{eqnarray*}
\int_{M\setminus B(y,r)} \abs{\nabla p_t(x,y)} d\mu(x)
&\leq& 
\sum_{i=0}^{\infty} V^{1/2}\br{y, 2^{i+1}r} \frac{C}{t^{\alpha} V^{1/2}\br{y, t^{1/m}}} e^{-c \br{\frac{2^{im}r^m}{t}}^{1/(m-1)}}
\\ &\leq& 
C t^{-\alpha} e^{-c\br{\frac{r^m}{t}}^{1/(m-1)}}.
\end{eqnarray*}
\end{proof}

\bigskip

\subsection{Weak $(1,1)$ boundedness of quasi Riesz transforms }
In order to show the weak $(1,1)$ boundedness of quasi Riesz transform, we will use the Calder\'on-Zygmund decomposition. 
Let us recall the result:
\begin{thm}\label{C-Z}
Let $(M,d,\mu)$ be a metric measured space satisfying the doubling volume property. Then for any given function $f\in L^1(M)\cap 
L^2(M)$ and $\lambda >0$, there exists a decomposition of $f$, $f=g+b=g+\sum_i b_i $ so that

\begin{enumerate}
\item $|g(x)|\leq C\lambda$  for almost all $x\in M$;

\item There exists a sequence of balls $B_i =B(x_i,r_i)$ so that each $b_i$ is supported in $B_i$,
\[
\int| b_i(x)|d\mu(x)\leq C\lambda\mu(B_i)\text{ and } 
\int b_i(x)d\mu(x)=0; 
\]

\item $\sum_i \mu(B_i)\leq \frac{C}{\lambda}\int|f(x)|d\mu(x)$;

\item $\Vert b\Vert_1\leq C\Vert f\Vert_1$ and $\Vert g\Vert_1\leq(1+C)\Vert f\Vert_1$;

\item There exists $k\in \mathbb{N}^*$ such that each $x\in M$ is contained in at most $k$ balls $B_i$.
\end{enumerate}
\end{thm}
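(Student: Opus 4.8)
The plan is to run the standard Calder\'on--Zygmund argument adapted to a space of homogeneous type, based on the Hardy--Littlewood maximal function together with a Whitney-type covering. Only the doubling property (\ref{doubling}) is used, and all the constants below — including the overlap constant $k$ — depend only on the doubling constant of $\mu$.

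\emph{Step 1: the bad set.} Let $\mathcal{M}f(x)=\sup_{B\ni x}\frac1{\mu(B)}\int_B|f|\,d\mu$ be the uncentred maximal function; by (\ref{doubling}) (Vitali covering lemma) it is of weak type $(1,1)$. Put $\Omega=\{x\in M:\mathcal{M}f(x)>\lambda\}$. Since $\mathcal{M}f$ is lower semicontinuous, $\Omega$ is open, and $\mu(\Omega)\le\frac C\lambda\|f\|_1<\infty$. As $M$ is non-compact we may assume $\Omega\ne M$ (when $\mu(M)=\infty$ this is automatic; when $\mu(M)<\infty$ it suffices to treat $\lambda>\mu(M)^{-1}\|f\|_1$, which is all that is needed for a weak $(1,1)$ estimate). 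Finally, since $f\in L^1$, Lebesgue differentiation gives $|f(x)|\le\mathcal{M}f(x)$ for $\mu$-a.e.\ $x$, hence $|f(x)|\le\lambda$ for a.e.\ $x\in M\setminus\Omega$.

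\emph{Step 2: Whitney covering and partition of unity.} Apply a Whitney covering to the open proper subset $\Omega$: there exist balls $B_i=B(x_i,r_i)$ with $r_i\simeq\dist(x_i,M\setminus\Omega)$ such that $\Omega=\bigcup_iB_i$ with each $B_i\subset\Omega$, a fixed dilate $\kappa B_i$ meets $M\setminus\Omega$, and the family $\{B_i\}_i$ has bounded overlap: each point of $M$ lies in at most $k$ of the $B_i$ — this is property (5). Choose a partition of unity $\{\chi_i\}$ with $0\le\chi_i\le1$, $\supp\chi_i\subset B_i$, $\sum_i\chi_i=\mathbf{1}_\Omega$, and $\int_M\chi_i\,d\mu\simeq\mu(B_i)$ (obtained by normalising bump functions supported in the $B_i$ and equal to $1$ on $\tfrac12 B_i$).

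\emph{Step 3: the decomposition and its properties.} Set
\[
c_i=\Big(\int_M\chi_i\,d\mu\Big)^{-1}\!\int_M f\chi_i\,d\mu,\qquad b_i=(f-c_i)\chi_i,\qquad b=\sum_i b_i,\qquad g=f-b.
\]
Then $\supp b_i\subset B_i$, $\int_M b_i\,d\mu=0$, and $g=f\mathbf{1}_{M\setminus\Omega}+\sum_i c_i\chi_i$. Pick $y_i\in\kappa B_i\setminus\Omega$; since $B_i\subset\kappa B_i$ and $y_i\in\kappa B_i$, doubling gives $|c_i|\le\frac{1}{\mu(B_i)}\int_{B_i}|f|\,d\mu\lesssim\frac{\mu(\kappa B_i)}{\mu(B_i)}\,\mathcal{M}f(y_i)\le C\lambda$, and with the bounded overlap we get $|g|\le Ck\lambda$ on $\Omega$ and $|g|\le\lambda$ on $M\setminus\Omega$ by Step 1, proving (1). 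The same estimate gives $\int_M|b_i|\,d\mu\le 2\int_{B_i}|f|\,d\mu\le C\lambda\mu(B_i)$, which is (2). Bounded overlap yields $\sum_i\mu(B_i)=\int_M\sum_i\mathbf{1}_{B_i}\,d\mu\le k\,\mu(\Omega)\le\frac{Ck}\lambda\|f\|_1$, which is (3). Finally $\|b\|_1\le\sum_i\|b_i\|_1\le C\lambda\sum_i\mu(B_i)\le C\|f\|_1$ and $\|g\|_1\le\|f\|_1+\|b\|_1\le(1+C)\|f\|_1$, giving (4). (The hypothesis $f\in L^2$ is used only to ensure $\mathcal{M}f<\infty$ a.e.\ and $b,g\in L^2$, so that the series converge in $L^2$.)

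\emph{Main obstacle.} The analytic content is essentially routine once the geometry is in place; the only genuinely technical ingredient is the Whitney covering lemma on a general doubling metric measure space — producing balls comparable to their distance to $M\setminus\Omega$ with overlap bounded purely in terms of the doubling constant, together with a subordinate partition of unity satisfying $\int\chi_i\simeq\mu(B_i)$ — plus the minor bookkeeping needed to dispose of the degenerate case $\Omega=M$.
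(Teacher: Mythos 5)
The paper gives no proof of this theorem at all, referring instead to Coifman--Weiss and Stein; your argument is precisely the standard one from those sources (weak type $(1,1)$ of the maximal function, Whitney covering of the open set $\Omega=\{\mathcal{M}f>\lambda\}$ with bounded overlap, a subordinate partition of unity, and mean-zero corrections $b_i=(f-c_i)\chi_i$), and it is correct. The only point worth noting is that the statement asserts the decomposition for every $\lambda>0$, and your reduction to $\lambda>\mu(M)^{-1}\Vert f\Vert_1$ in the finite-volume case is harmless in this paper's setting, since a complete non-compact manifold satisfying ($D$) has infinite volume, so $\Omega\neq M$ automatically.
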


We refer to \cite{CW71} and \cite{St701} for the proof. 
\medskip

Our result is
\begin{thm}
Let M be a complete Riemannian manifold satisfying (\ref{doubling}) and (\ref{ue}). Then for any $0<\alpha<1/2$, the quasi Riesz 
transform $\nabla(I+\Delta)^{-1/2}+\nabla e^{-\Delta} \Delta^{-\alpha}$ is of weak type $(1,1)$.
\end{thm}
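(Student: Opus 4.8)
The plan is to split the quasi Riesz transform into its two pieces and treat them separately. The local part $\nabla(I+\Delta)^{-1/2}$ is handled by the classical result of Coulhon--Duong (Theorem \ref{Rploc} and Theorem \ref{thm-cd}): under $(D_{loc})$ and $(DUE_{loc})$---both of which follow from (\ref{doubling}) and (\ref{ue}), the latter restricted to $t\in(0,1]$ giving exactly the Gaussian regime---the operator $\nabla(I+\Delta)^{-1/2}$ is of weak type $(1,1)$. So the entire burden is to prove that $T:=\nabla e^{-\Delta}\Delta^{-\alpha}$ is of weak type $(1,1)$ for $0<\alpha<1/2$. Since $T$ is $L^2$ bounded (it is $L^p$ bounded for $1<p\leq 2$ by the Corollary to Proposition \ref{GP}, and one can also see $L^2$ boundedness directly), the natural route is a Calder\'on--Zygmund argument via Theorem \ref{C-Z}.

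\textbf{Calder\'on--Zygmund decomposition.}
Fix $\lambda>0$ and $f\in L^1\cap L^2$, and write $f=g+b=g+\sum_i b_i$ as in Theorem \ref{C-Z}, with balls $B_i=B(x_i,r_i)$. For the good part, $\|g\|_2^2\leq C\lambda\|g\|_1\leq C\lambda\|f\|_1$, so by $L^2$ boundedness of $T$,
\[
\mu\br{\{|Tg|>\lambda\}}\leq \frac{\|Tg\|_2^2}{\lambda^2}\leq \frac{C}{\lambda}\|f\|_1.
\]
For the bad part, the standard reduction is: on $\bigcup_i 4B_i$ (whose measure is $\lesssim \lambda^{-1}\|f\|_1$ by (3) of Theorem \ref{C-Z} and (\ref{doubling})) we use Chebyshev trivially, and on the complement we must show
\[
\sum_i \int_{M\setminus 4B_i} |Tb_i(x)|\,d\mu(x)\leq C\|f\|_1.
\]
This is where the cancellation $\int b_i\,d\mu=0$ and the weighted heat kernel estimates of Section 3.2 enter.

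\textbf{The core estimate.}
Write $T=\nabla e^{-\Delta}\Delta^{-\alpha}=c_\alpha\int_0^\infty \nabla e^{-(1+s)\Delta}\,\frac{ds}{s^{1-\alpha}}$, equivalently $T=c_\alpha\int_1^\infty \nabla e^{-t\Delta}(t-1)^{\alpha-1}\,dt$, so the kernel $K(x,y)$ of $T$ satisfies $|K(x,y)|\lesssim \int_1^\infty |\nabla_x p_t(x,y)|(t-1)^{\alpha-1}\,dt$ plus a harmless piece for $t$ near $1$ which is essentially $\nabla e^{-\Delta}$ composed with something bounded. Using the vanishing moment, $Tb_i(x)=\int_{B_i}\br{K(x,y)-K(x,x_i)}b_i(y)\,d\mu(y)$, and one would like a Hörmander-type bound $\int_{M\setminus 4B_i}|K(x,y)-K(x,x_i)|\,d\mu(x)\leq C$ for $y\in B_i$. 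The efficient substitute, following \cite{CD99}, is to avoid differentiating the kernel in $y$ and instead exploit $L^1\to L^2$ off-diagonal (Davies--Gaffney) bounds: split $M\setminus 4B_i=\bigcup_{j\geq 2}C_j(B_i)$, and on each annulus use Cauchy--Schwarz with $\mu(C_j(B_i))^{1/2}$ against $\|Tb_i\|_{L^2(C_j(B_i))}$. The latter is controlled by writing $Tb_i=\int \nabla e^{-t\Delta}b_i\,(\cdots)$ and invoking the weighted gradient estimate (\ref{we}) of Lemma \ref{lem2}: for $y\in B_i$ and $t\geq 1$, $\int_{M\setminus B(y,\rho)}|\nabla p_t(x,y)|\,d\mu(x)\lesssim t^{-\alpha}\exp\br{-c(\rho^m/t)^{1/(m-1)}}$, combined with Corollary \ref{cor} for the $t<1$ contribution. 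Integrating against $(t-1)^{\alpha-1}dt$ and $\|b_i\|_1\leq C\lambda\mu(B_i)$, then summing over $j\geq 2$ (the sub-Gaussian factor $\exp(-c(2^{jm}r_i^m/t)^{1/(m-1)})$ beats the polynomial volume growth $(2^j)^\nu$ for every fixed $t$, and the $t$-integral converges precisely because $\alpha<1/2$), one gets $\int_{M\setminus 4B_i}|Tb_i|\,d\mu\leq C\lambda\mu(B_i)$. Summing in $i$ and using (3) of Theorem \ref{C-Z} gives the bad-part bound, completing the proof.

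\textbf{Main obstacle.}
The delicate point is the interplay between the time integral $\int_1^\infty(\cdot)(t-1)^{\alpha-1}dt$ and the annular sum: for small $t$ the sub-Gaussian weight is weak (the exponent $(\rho^m/t)^{1/(m-1)}$ is large only when $t\ll\rho^m$), while for large $t$ the prefactor $t^{-\alpha}$ decays too slowly to be integrable on its own. One must therefore balance these carefully---essentially splitting the $t$-integral at $t\sim r_i^m$ (or $t\sim 1$) and at $t\sim (2^j r_i)^m$---and it is exactly the constraint $\alpha<1/2$ that makes every resulting piece summable; pushing to $\alpha=1/2$ would require the endpoint estimate (\ref{we}) with $\alpha=1/2$, which the method of Lemma \ref{lem2} does not deliver when $m>2$ (see the Remark after Lemma \ref{lem2}).
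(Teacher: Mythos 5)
Your overall architecture (Coulhon--Duong for the local part, Calder\'on--Zygmund decomposition for $T=\nabla e^{-\Delta}\Delta^{-\alpha}$, $L^2$ boundedness for the good part) matches the paper. But the treatment of the bad part has a genuine gap: the ``core estimate'' as you describe it does not close, because you never actually exploit any cancellation. If you bound $\int_{M\setminus 4B_i}|Tb_i|\,d\mu$ directly by $\|b_i\|_1\sup_{y\in B_i}\int_{M\setminus 4B_i}\int_0^\infty|\nabla_x p_{1+s}(x,y)|\,s^{\alpha-1}\,ds\,d\mu(x)$ and invoke (\ref{we}), you get a time integral of the form
\[
\int_0^\infty s^{\alpha-1}(1+s)^{-\alpha}\,e^{-c\left(\frac{r_i^m}{1+s}\right)^{1/(m-1)}}ds ,
\]
whose integrand behaves like $s^{-1}$ as $s\to\infty$ (the exponential tends to $1$), so it diverges logarithmically for \emph{every} $\alpha$. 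The constraint $\alpha<1/2$ is not what saves the large-time end; it only governs the validity of the gradient estimate (\ref{we}) itself. The alternative you gesture at --- the H\"ormander bound $\int|K(x,y)-K(x,x_i)|\,d\mu(x)\le C$ via the vanishing moment of $b_i$ --- would require pointwise regularity of $\nabla_x p_t(x,\cdot)$ in the second variable, which is not available under (\ref{doubling}) and (\ref{ue}); avoiding exactly this is the whole point of the Duong--McIntosh method you cite.

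The missing idea is the regularization $Tb_i = T e^{-t_i\Delta}b_i + T(I-e^{-t_i\Delta})b_i$ with $t_i=\rho(r_i)$ adapted to the ball (whence the split into balls of radius $\ge 1$ and $<1$). The factor $(I-e^{-t_i\Delta})$ is the substitute for cancellation: the kernel of $T(I-e^{-t_i\Delta})$ is $\int_0^\infty\bigl(\frac{1}{s^{1-\alpha}}-\frac{\mathbf{1}_{\{s>t_i\}}}{(s-t_i)^{1-\alpha}}\bigr)\nabla p_{s+1}(x,y)\,ds$, and the bracket decays like $t_i\,s^{\alpha-2}$ for $s\gg t_i$, which is precisely what makes the time integral converge at infinity when combined with (\ref{we}). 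The remaining piece $Te^{-t_i\Delta}b_i$ cannot be discarded by a Chebyshev bound on $\bigcup 4B_i$ because it is not supported there; in the paper it is handled globally in $L^2$ via the Davies--Gaffney bounds of Corollary \ref{cor} together with a duality/maximal-function (Kolmogorov) argument to sum over $i$. Your proposal contains neither of these two steps, and without them the argument fails; with them, it becomes the paper's proof.
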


\begin{rem}
By Marcinkiewicz interpolation theorem, this gives back Theorem \ref{thm1}, but under much stronger assumptions.
\end{rem}

\begin{rem}
In the following proof, we will adopt the singular integral technique used by Coulhon and Duong in \cite{CD99}, which was first 
developed by Duong and McIntosh in \cite{DM99}. 
\end{rem}

\begin{proof}
Note that the local Riesz transform $\nabla(I+\Delta)^{-1/2}$ is of weak type $(1,1)$ ( see Theorem \ref{Rploc}).
Denote $T=\nabla e^{-\Delta} \Delta^{-\alpha}$, it remains to show that 
\[
\mu(\{x: |T f(x)|>\lambda\})\leq C\lambda^{-1}\Vert f\Vert_1.
\]

Fix $f\in L^1(M)\cap L^2(M)$, we take the Calder\'{o}n-Zygmund decomposition of $f$ at the level of $\lambda$, i. e., 
$f=g+b=g+\sum_ib_i$, then
\[
\mu(\{x: |T f(x)|>\lambda\})
\leq\mu(\{x:|T g(x)|> \lambda/ 2 \})+\mu(\{x:|T b(x)|>\lambda/ 2 \}).
\]

Since $T$ is $L^2$ bounded, by using Theorem \ref{C-Z} we get
\[
\mu(\{x: |T g(x)|>\lambda/ 2\}) 
\leq C\lambda^{-2}\Vert g\Vert_2^2 \leq C\lambda^{-1}\Vert g\Vert_1
\leq C\lambda^{-1}\Vert f\Vert_1.
\]

As for the second term, we divide $\{B_i\}$ into two classes: the one in which the balls have radius no less than $1$ and the one in 
which the balls have radius smaller than $1$. Denote by
\begin{eqnarray*}
&& \mathcal {C}_1=\{i:B_i=B(x_i,r_i) \text{ with } r_i\geq 1\};
\\&& \mathcal {C}_2=\{i:B_i=B(x_i,r_i) \text{ with } r_i< 1\}.
\end{eqnarray*}
Then we have
\begin{eqnarray*}
\mu(\{x: |T \sum_{i}b_i(x)|>\lambda/ 2\}) 
& \leq & \mu(\{x: |T \sum_{i\in \mathcal {C}_1}b_i(x)|>\lambda/ 4\})
\\ &&+ \mu(\{x: |T \sum_{i\in\mathcal {C}_2}b_i(x)|>\lambda/ 4\}).
\end{eqnarray*}

Write 
\begin{equation}\label{divide}
T b_i=T e^{-t_i\Delta }b_i+T \left(I-e^{-t_i\Delta }\right)b_i,
\end{equation}
where $t_i=\rho(r_i)$ with $\rho$ defined in (\ref{t}). In the following, we will consider the two cases of balls separately.
 
\medskip

\noindent{\bf Case 1: }For balls with radius no less than $1$, our aim here is to prove 
\[
\mu(\{x: |T \sum_{i\in \mathcal {C}_1}b_i(x)|>\lambda/ 4\}) \leq C \lambda^{-1}\Vert f\Vert_1.
\]

Using (\ref{divide}), we have
\begin{eqnarray*}
\mu(\{x: |T \sum_{i\in \mathcal {C}_1}b_i(x)|>\lambda/ 4\}) 
& \leq & \mu(\{x: |T \sum_{i\in \mathcal {C}_1}e^{-t_i\Delta }b_i(x)|>\lambda/ 8\})
\\ &&+ \mu(\{x: |T \sum_{i\in \mathcal {C}_1}(I-e^{-t_i\Delta })b_i(x)|>\lambda/ 8\}).
\end{eqnarray*}

We begin to estimate the first term. Since $T$ is $L^2$ bounded, then
\[
\mu(\{x: |T \sum_{i\in \mathcal {C}_1}e^{-t_i\Delta }b_i(x)|>\lambda/ 8\})
\leq \frac{C}{\lambda^2}\left\Vert \sum_{i\in \mathcal {C}_1}e^{-t_i\Delta }b_i\right\Vert_2^2.
\]

By a duality argument, 
\[
\norm{\sum_{i\in \mathcal {C}_1}e^{-t_i\Delta }b_i}_2 
= \sup_{\norm{\phi}_2=1} \abs{<\sum_{i\in\mathcal {C}_1}e^{-t_i\Delta }b_i,\phi>}.
\]

It holds that for $t\geq 1$ and $\phi\geq 0$
\[
\sup_{y\in B(x,t^{1/m})} e^{-t\Delta } \phi(y) \leq C \inf_{y\in B(x,t^{1/m})} \mathcal M(\phi)(y),
\]
where $\M$ denotes the Littlewood-Paley maximal operator: 
\[
\M f(x)=\sup_{B\ni x} \frac{1}{\mu(B)} \int_B |f(y)| d\mu(y).
\]
Indeed, for any $y\in B(x,t^{1/m})$, we have
\begin{eqnarray*}
e^{-t\Delta } \phi(y)
&=& \int_{M}p_{t}(y,z) \phi(z) d\mu(z)
=\sum_{j=1}^{\infty} \int_{C_j(B(x,t^{1/m}))}p_{t}(y,z) \phi(z) d\mu(z)
\\ &\leq& C\sum_{j=1}^{\infty} \frac{V(x,2^{j+1}t^{1/m})}{V(y,t^{1/m})} e^{-c2^{jm/(m-1)}} \frac{1}{V(x,2^{j+1}t^{1/m})}\int_{B(x,2^{j+1}t^{1/m})} \phi(z) d\mu(z)
\\ &\leq& \inf_{y\in B(x,t^{1/m})} \mathcal M(\phi)(y).
\end{eqnarray*}

Then
\begin{eqnarray*}
\abs{<\sum_{i\in\mathcal {C}_1}e^{-t_i\Delta }b_i,\phi>}
&=& \abs{\sum_{i\in\mathcal {C}_1} <b_i, e^{-t_i\Delta } \phi>}
\leq 
\sum_{i\in\mathcal {C}_1} \int_M |b_i| d\mu \sup_{B_i} e^{-t_i\Delta }|\phi|
\\ &\leq&
C \sum_{i\in\mathcal {C}_1} \int_M |b_i| d\mu \inf_{B_i} \mathcal M(|\phi|)
\leq  
C \lambda \sum_{i\in\mathcal {C}_1} \int_{B_i}  \br{\M\br{|\phi|^2}(y)}^{1/2} d\mu(y) 
\\ &\leq & 
C \lambda \int_M \sum_{i\in\mathcal {C}_1} \mathbbm 1_{B_i}(y) \br{\M\br{|\phi|^2}(y)}^{1/2} d\mu(y)
\\ &\leq & 
C \lambda \int_{\cup_{i\in\mathcal {C}_1}B_i}  \br{\M\br{|\phi|^2}(y)}^{1/2} d\mu(y)
\\ &\leq&
C \lambda \mu^{1/2}(\cup_{i\in\mathcal {C}_1}B_i) \leq C \lambda^{1/2}\norm{f}_1^{1/2}.
\end{eqnarray*}
The inequality in the fourth line is due to the finite overlapping of the Calder\'on-Zygmund decomposition. In the first inequality of the last line,
we use Kolmogorov's lemma and weak type $(1,1)$ of the Hardy-Littlewood maximal function, as in \cite{HM03}.  

Therefore, we obtain
\[
\mu(\{x: |T \sum_{i\in \mathcal {C}_1}e^{-t_i\Delta }b_i(x)|>\lambda/ 8\}) \leq C \lambda^{-1}\norm{f}_1.
\]

It remains to show 
$\mu(\{x: |T \sum_{i\in \mathcal {C}_1}(I-e^{-t_i\Delta })b_i(x)|>\lambda/ 8\})\leq C \lambda^{-1} \Vert f\Vert_1$. 
We have
\begin{eqnarray*}
&&\mu(\{x: |T \sum_{i\in \mathcal {C}_1}(I-e^{-t_i\Delta })b_i(x)|>\lambda /8\})
\\&\leq & 
\mu(\{x\in \bigcup_{i\in \mathcal {C}_1}2B_i: |T \sum_{i\in \mathcal {C}_1}(I-e^{-t_i\Delta })b_i(x)|>\lambda /8\})
\\&& 
+\mu(\{x\in M\setminus \bigcup_{i\in \mathcal {C}_1}2B_i: |T \sum_{i\in \mathcal {C}_1}(I-e^{-t_i\Delta })b_i(x)|>\lambda /8\})
\\&\leq & 
\sum_{i\in \mathcal {C}_1}\mu (2 B_i)+\frac{8}{\lambda} \sum_{i\in \mathcal {C}_1}
\int_{M\setminus 2 B_i}|T (I-e^{-t_i\Delta })b_i(x)|d\mu(x).
\end{eqnarray*}

We claim: $\forall t\geq 1$, $\forall b$ with support in $B$, then
\[
\int_{M\setminus 2 B}|T(I-e^{-t\Delta })b(x)|d\mu(x)\leq C \Vert b\Vert_1.
\]
Therefore, by using Theorem \ref{C-Z}, we obtain
\[
\mu(\{x: |T \sum_{i\in \mathcal {C}_1}(I-e^{-t_i\Delta })b_i(x)|>\lambda/ 8\})
\leq \sum_{i\in \mathcal {C}_1}\mu (2 B_i)+\frac{C}{\lambda} \sum_{i\in \mathcal {C}_1} \norm{b_i}_1
\leq C \lambda^{-1} \Vert f\Vert_1.
\]

Denote by $k_t(x,y)$ the kernel of the operator $T(I-e^{-t_i\Delta })$, then
\begin{eqnarray*}
\int_{M\setminus 2 B}|T(I-e^{-t\Delta })b(x)|d\mu(x)
&\leq & 
\int_{M\setminus 2 B}\int_{B}|k_t(x,y)||b(y)|d\mu(y)d\mu(x)
\\&\leq &
\int_M |b(y)|\int_{d(x,y)\geq t^{1/m}}|k_t(x,y)|d\mu(x)d\mu(y).
\end{eqnarray*}

It is enough to show that $\int_{d(x,y)\geq t^{1/m}}|k_t(x,y)|d\mu(x)$ is uniformly bounded for $t\geq 1$.

The identity $\Delta ^{-\alpha}=\int_0^\infty e^{-s\Delta }\frac{ds}{s^{1-\alpha}}$ (we ignore the constant here) gives us
\[
T(I-e^{-t\Delta })=\int_0^\infty \nabla e^{-(s+1)\Delta }(I-e^{-t\Delta })\frac{ds}{s^{1-\alpha}},
\]
that is,
\begin{eqnarray*}
k_t(x,y)
&=& 
\int_0^\infty \left(\nabla p_{s+1}(x,y)-\nabla p_{s+t+1}(x,y)\right)\frac{ds}{s^{1-\alpha}}
\\&=&
\int_0^\infty \left(\frac{1}{s^{1-\alpha}}-\frac{1_{\{s>t \}}}{(s-t)^{1-\alpha}}\right)\nabla p_{s+1}(x,y)ds.
\end{eqnarray*}

Thus by using the estimate (\ref{we}), we have
\begin{eqnarray*}
&&\int_{d(x,y)\geq t^{1/m}}\left| k_t(x,y)\right| d\mu(x)
\\&=&
\int_{d(x,y)\geq t^{1/m}}\left|\int_0^\infty \left(\frac 1{s^{1-\alpha}}-
\frac{1_{\{s>t \}}}{(s-t)^{1-\alpha}}\right)\nabla p_{s+1}(x,y)ds \right|d\mu(x)
\\&\leq &
\int_0^\infty \left|\frac 1{s^{1-\alpha}}-\frac{1_{\{s>t \}}}{(s-t)^{1-\alpha}}\right|
\cdot \int_{d(x,y)\geq t^{1/m}}\left|\nabla p_{s+1}(x,y)\right|d\mu(x)ds
\\&\lesssim & 
\int_0^\infty \left|\frac 1{s^{1-\alpha}}-\frac{1_{\{s>t\}}}{(s-t)^{1-\alpha}}\right| (s+1)^{-\alpha}e^{-c\left(\frac{t}{s+1}\right)^{1/(m-1)}}ds
\\&=& 
\left(\int_0^1 +\int_1^t \right)\frac{1}{s^{1-\alpha}(s+1)^{\alpha}} e^{-c\left(\frac{t}{s+1}\right)^{1/(m-1)}}ds
\\&&+ 
\int_t^\infty \left|\frac 1{s^{1-\alpha}}-\frac{1}{(s-t)^{1-\alpha}}\right|(s+1)^{-\alpha} e^{-c\left(\frac{t}{s+1}\right)^{1/(m-1)}}ds
\\&=& K_1+K_2+K_3.
\end{eqnarray*}

In fact, $K_1,K_2,K_3$ are uniformly bounded:
\[
K_1\leq \int_0^1 s^{\alpha-1}ds<\infty;
\]

Since $s+1 \simeq s$ for $s>1$ and we can dominate the $e^{-x}$ by $C x^{-c}$ for any fixed $c>0$, we have
\[
K_2\leq \int_1^t e^{-c'\left(\frac{t}{s}\right)^{1/(m-1)}}\frac{ds}{s} \leq C\int_1^t \br{\frac{s}{t}}^c \frac{ds}{s}<\infty;
\]

For $K_3$, 
\begin{eqnarray*}
K_3&\leq &\int_t^\infty \abs{\frac 1{s^{1-\alpha}}-\frac {1}{(s-t)^{1-\alpha}}}s^{-\alpha}ds
\\ & =&
\int_0^\infty \abs{\frac 1{(u+1)^{1-\alpha}}-\frac {1}{u^{1-\alpha}}}(u+1)^{-\alpha}du
\\&\leq & 
\int_0^1\br{\frac 1{(u+1)}+\frac {1}{u^{1-\alpha}}}(u+1)^{-\alpha}du+\int_1^\infty \frac1{(u+1)u^{1-\alpha}}du
\\&\leq & 
\int_0^1 \frac 2{u^{1-\alpha}}du+\int_1^\infty \frac1{u^{2-\alpha}}du <\infty.
\end{eqnarray*}
\medskip
Note that we get the second line by changing variable with $u=\frac{s}{t}-1$.

\noindent{\bf Case 2: } It remains to show
\[
\mu(\{x: |T \sum_{i\in \mathcal {C}_2}b_i(x)|>\lambda/ 4\})\leq C\lambda^{-1}\Vert f\Vert_1.
\]

We repeat the argument as Case 1. Still from (\ref{divide}), we have 
\begin{eqnarray*}
\mu(\{x: |T \sum_{i\in \mathcal {C}_2}b_i(x)|>\lambda/ 4\}) 
& \leq & \mu(\{x: |T \sum_{i\in \mathcal {C}_2}e^{-t_i\Delta }b_i(x)|>\lambda/ 8\})
\\ &&+ \mu(\{x: |T \sum_{i\in \mathcal {C}_2}(I-e^{-t_i\Delta })b_i(x)|>\lambda/ 8\}).
\end{eqnarray*}

By using the $L^2$ boundedness of $T$, the same duality argument in Case 1 yields
\[
\mu(\{x: |T \sum_{i\in \mathcal {C}_2}e^{-t_i\Delta }b_i(x)|>\lambda/ 8\}) \leq C \lambda^{-1} \Vert f\Vert_1.
\]

For the estimate of $\mu(\{x: |T\displaystyle \sum_{i\in \mathcal {C}_2}(I-e^{-t_i\Delta })b_i(x)|>\lambda/ 8\})$, it suffices to show 
that 
$\int_{d(x,y)\geq  t^{1/2}}\abs{k_t(x,y)}d\mu(x)$ is finite and does not depend on $t<1$. In fact,
\begin{eqnarray*}
&&\int_{d(x,y)\geq t^{1/2}}\left|k_t(x,y)\right|d\mu(x)
\\ &\leq& 
\int_0^\infty \abs{\frac 1{s^{1-\alpha}}-\frac{1_{\{s>t\}}}{(s-t)^{1-\alpha}}}
\cdot \int_{d(x,y)\geq t^{1/2}}\abs{\nabla p_{s+1}(x,y)}d\mu(x)ds
\\ &\lesssim& 
\int_0^\infty \abs{\frac 1{s^{1-\alpha}}-\frac{1_{\{s>t\}}}{(s-t)^{1-\alpha}}} (s+1)^{-\alpha} e^{-c\br{\frac{t^{m/2}}{s+1}}^{1/(m-1)}}ds
\\ &=& 
\int_0^t \frac{1}{s^{1-\alpha} (s+1)^{\alpha}} e^{-c\br{\frac{t^{m/2}}{s+1}}^{1/(m-1)}}ds
\\ &&
+\int_t^\infty \abs{\frac 1{s^{1-\alpha}}-\frac{1}{(s-t)^{1-\alpha}}} (s+1)^{-\alpha} e^{-c\br{\frac{t^{m/2}}{s+1}}^{1/(m-1)}}ds
\\&:=& 
K'_1+K'_2.
\end{eqnarray*}

Because $t<1$, thus $K'_1<K_1$ converges.

We can estimate $K'_2$ in the same way as for $K_3$ and get a bound that does not depend on $t$.
\end{proof}

\bigskip

{\bf Acknowledgements:} I would like to thank Thierry Coulhon for many helpful discussions and for his constant encouragement. 
Thanks are also due to Fr\'ed\'eric Bernicot, Shibing Chen and the referee for useful suggestions and remarks. The results in this paper are part of my PhD thesis in preparation in  
cotutelle between the Laboratoire de Math\'ematiques, Orsay and the Mathematical Science Institute, Canberra under the 
supervision of Pascal Auscher and Thierry Coulhon.


\begin{thebibliography}{10}

\bibitem{Al02}
G. K. Alexopoulos,
\newblock Sub-{L}aplacians with drift on {L}ie groups of polynomial volume growth,
\newblock {\it Memoirs of Amer. Math. Soc.} \textbf{155(739)} (2002).

\bibitem{AC05}
P. Auscher and T. Coulhon, 
\newblock{Riesz transform on manifolds and {P}oincar{\'e} inequalities}, 
\newblock {\em Ann. Sc. Norm. Super. Pisa Cl. Sci. (5)} \textbf{4(3)} (2005), 531--555.

\bibitem{ACDH04}
P. Auscher, T.  Coulhon, X. T. Duong and S. Hofmann, 
\newblock{Riesz transform on manifolds and heat kernel regularity}, 
\newblock {\em Ann. Sci. {\'E}cole Norm. Sup. (4)} \textbf{37(6)} (2004), 911--957.


\bibitem{B87} 
D. Bakry, 
\newblock{Etude des transformations de Riesz dans les vari\'et\'es riemanniennes \`a courbure de Ricci minor\'ee}, 
\newblock{S\'eminaire de probabilit\'es, {XXI}}, 
\newblock {\em Lecture Notes in Math.} 
\textbf{1247} (1987), 137--172.
	
\bibitem{BE85}
D. Bakry and M. {{\'E}}mery, 
\newblock{Diffusions hypercontractives}, 
\newblock {S{\'e}minaire de probabilit{\'e}s, {XIX}, 1983/84},
\newblock {\em Lecture Notes in Math.} 
\textbf{1123} (1985), 177--206.

\bibitem{Ba04}
M. Barlow, 
\newblock{Which values of the volume growth and escape time exponent are possible for a graph?}, 
\newblock {\em Rev. Mat. Iberoamericana} 
\textbf{20(1)} (2004), 1--31.

\bibitem{BB92} 
M. Barlow and R. Bass
\newblock{Transition densities for {B}rownian motion on the {S}ierpi{\'n}ski carpet}, 
\newblock {\em Probab. Theory Related Fields} \textbf{91(3-4)} (1992), 307--330.

\bibitem{BB99a} 
M. Barlow and R. Bass
\newblock{Brownian motion and harmonic analysis on Sierpinski carpets}, 
\newblock {\em Canad. J. Math.} \textbf{51(4)} (1999), 673-744.

\bibitem{BB99b} 
M. Barlow and R. Bass
\newblock{Brownian motion and harmonic analysis on Sierpinski carpets}, 
\newblock {\em Random walks and discrete potential theory ({C}ortona, 1997)}  Sympos. Math., XXXIX, 26--55. 
Cambridge Univ. Press, Cambridge, 1999.

\bibitem{BCG01} 
M. Barlow, T. Coulhon and A. Grigor'yan,
\newblock{Brownian motion and harmonic analysis on Sierpinski carpets}, 
\newblock {\em Invent. Math.} \textbf{144(3)} (2001), 609--649.

\bibitem{Ca07}
G. Carron, 
\newblock{Riesz transforms on connected sums}, 
\newblock {\em Ann. Inst. Fourier (Grenoble)} \textbf{57(7)} (2007), 2329--2343.
	
\bibitem{CCH06}
G. Carron, T. Coulhon and A. Hassell, 
\newblock{Riesz transform and {$L^p$}-cohomology for manifolds with {E}uclidean ends}, 
\newblock {\em Duke Math. J.} \textbf{133(1)} (2006), 59--93.

\bibitem{Ch}
L. Chen,
\newblock Sub-Gaussian heat kernel estimates, {H}ardy spaces and {R}iesz transforms,
\newblock Phd Thesis in preparation.

\bibitem{CW71}
R. Coifman and G. Weiss, 
\newblock {\it Analyse harmonique non-commutative sur certains espaces homog\`{e}nes}, 
\newblock Lecture notes in Math. \textbf{242}, Springer-Verlag, Berlin, 1971.

\bibitem{CD99}
T. Coulhon and X. T. Duong, 
\newblock{Riesz transforms for {$1\leq p\leq 2$}}, 
\newblock {\em Trans. Amer. Math. Soc.} \textbf{351(3)} (1999), 1151--1169.

\bibitem{CD03}
T. Coulhon and X. T. Duong, 
\newblock{Riesz transform and related inequalities on noncompact {R}iemannian manifolds}, 
\newblock {\em Comm. Pure Appl. Math.} \textbf{56(12)} (2003), 1728--1751.

\bibitem{CDL03}
T. Coulhon, X. T. Duong and X. D. Li, 
\newblock{Littlewood-{P}aley-{S}tein functions on complete {R}iemannian manifolds for {$1\leq p\leq 2$}}, 
\newblock {\em Studia Math.} \textbf{154(1)} (2003), 37--57.

\bibitem{CS08} 
T. Coulhon and A. Sikora,
\newblock{Gaussian heat kernel bounds via Phragm\'en-Lindel\"of theorem}, 
\newblock{\em Proc. London Math. Soc.} \textbf{96(2)} (2008), 507--544.
 
\bibitem{CS10}
T. Coulhon and A. Sikora, 
\newblock{Riesz meets {S}obolev}, 
\newblock {\em Colloq. Math.} \textbf{118(2)} (2010), 685--704.
	
\bibitem{Cow83}
M. Cowling,
\newblock{Harmonic analysis on semigroups}, 
\newblock {\em Ann. of Math. (2)} \textbf{117(2)} (1983), 267--283.

\bibitem{Da97}
E. B. Davies,
\newblock{Non-{G}aussian aspects of heat kernel behaviour}, 
\newblock {\em J. London Math. Soc. (2)} \textbf{55(1)} (1997), 105--125.

\bibitem{Da90}
E. B. Davies,
\newblock{\it Heat kernels and spectral theory}, 
Cambridge Tracts in Mathematics \textbf{92}, Cambridge University Press, Cambridge,1990.
	
\bibitem{De}
B. Devyver,
\newblock A perturbation result for the Riesz transform,
\newblock {\em Ann. Sc. Norm. Super. Pisa Cl. Sci.}
to appear.

\bibitem{Du08}
N. Dungey, 
\newblock{A {L}ittlewood-{P}aley-{S}tein estimate on graphs and groups}, 
\newblock {\em Studia Math.} \textbf{189(2)} (2008), 113--129.
	
\bibitem{DER03}
N. Dungey, A. F. M. ter Elst and D. W. Robinson, 
\newblock {\it Analysis on {L}ie groups with polynomial growth}, 
\newblock Progress in Mathematics \textbf{214}, Birkh{\"a}user Boston Inc., Boston, MA, 1971.
	
\bibitem{DM99}
X. T. Duong and A.McIntosh, 
\newblock{Singular integral operators with non-smooth kernels on irregular domains}, 
\newblock {\em Rev. Mat. Iberoamericana} 
\textbf{15(2)} (1999), 233--265.

\bibitem{Gr09} 
A. Grigor'yan,
\newblock{\it Heat kernel and analysis on manifolds},  
\newblock AMS/IP Studies in Advanced Mathematics \textbf{47}, Amer. Math.  Soc., Providence, RI, 2009.

\bibitem{GT12} 
A. Grigor'yan and A. Telcs,
\newblock{Two-sided estimates of heat kernels on metric measure spaces}, 
\newblock{\em Ann. Proba.} \textbf{40(3)} (2012), 1212--1284.

\bibitem{GSC11}
P. Gyrya and L. Saloff-Coste,
\newblock{\it Neumann and Dirichlet heat kernels in inner uniform domains},
Ast\'erisque 336, Soc. Math. de France, 2011.

\bibitem{HSC01} 
W. Hebisch and L. Saloff-Coste,
\newblock{On the relation between elliptic and parabolic Harnack inequalities}, 
\newblock{\em Ann. Inst. Fourier}, \textbf{51(5)} (2001), 1437--1481.

\bibitem{HM03} 
S. Hofmann and J. M. Martell,
\newblock{$L^p$ bounds for Riesz transforms and square roots associated to second order elliptic operators},
\newblock{\em Publ. Mat.}, \textbf{47(2)} (2003), 497--515.

\bibitem{St701} 
E. M. Stein,
\newblock{\it Singular integrals and differentiability properties of functions}, 
\newblock Princeton Mathematical Series \textbf{30}, Princeton Univ. Press, Princeton, N. J., 1970.

\bibitem{St70} 
E. M. Stein,
\newblock{\it Topics in harmonic analysis related to the Littlewood-Paley theory}, 
\newblock Annals of Mathematics Studies \textbf{63}, Princeton Univ. Press, Princeton, N. J., 1970.

\bibitem{Str83} 
R. S. Strichartz,
\newblock{Analysis of the {L}aplacian on the complete {R}iemannian manifold},
\newblock{\em J. Funct. Anal.} \textbf{52(1)} (1983), 48--79.
 
\end{thebibliography}
\end{document}